\newtheorem{theorem}{Theorem}
\newtheorem{corollary}{Corollary}
\newtheorem{proposition}{Proposition}
\newtheorem{lemma}{Lemma}
\newtheorem{remark}{Remark}
\newcommand{\R}{\mathbb{R}}
\newcommand{\ud}{\mathrm{d}}
\begin{document}

\title{Branching processes with interactions: sub-critical cooperative regime.}
\author[1]{Adri\'an Gonz\'alez Casanova}
\author[2]{Jos\'e Luis Perez}
\author[3]{Juan Carlos Pardo}

\affil[1]{\footnotesize Instituto de Matem\'aticas, Universidad Nacional Aut\'onoma de M\'exico (UNAM)
\'Area de la Investigaci\'on Cient\'ifica, Circuito exterior, Ciudad Universitaria, 04510, M\'exico, D.F.} 
\affil[2]{\footnotesize Centro de Investigaci\'on en Matem\'aticas A.C., Calle Jalisco s/n. 36240 Guanajuato, M\'exico}
\affil[3]{\footnotesize Centro de Investigaci\'on en Matem\'aticas A.C., Calle Jalisco s/n. 36240 Guanajuato, M\'exico}

\date{}
\maketitle 
\begin{abstract}
In this paper, we introduce a  family of processes with values on the nonnegative integers that describes the dynamics of populations where individuals are allowed to have different types of interactions. The types of interactions that we consider include pairwise interaction, such as competition, annihilation and    cooperation; and 
 interaction among several individuals that can be considered as  catastrophes. We call such families of processes  \textit{branching processes with interactions}. Our aim  is to study their long term behaviour  under a specific regime of the pairwise interaction parameters that we introduce as {\it subcritical cooperative regime}. 
 
Under  such regime, we prove that  a process in this class comes down from infinity and has a  moment dual which turns out to be a jump-diffusion that can be  thought as  the evolution of the frequency of a trait or phenotype  and whose parameters have a classical interpretation in terms of population genetics.
The moment dual is an important tool for characterizing the stationary distribution of branching processes with interactions whenever  such distribution exists but it is an interesting object on its own right.

\end{abstract}

\textbf{Keywords}: 
Branching coalescing processes, coming down from infinity, stability, moment duality, population genetics.

\textbf{MSC}:  Primary 60K35;Secondary 60J80.

 \section{Introduction and main results.}\label{intro}

Branching processes is one of the most important families of probabilistic models that describe the dynamics of a given population. The simplest branching model is the so-called Bienaym\'e-Galton-Watson (BGW) process which is a Markov chain whose time steps are the non overlapping generations  with individuals reproducing independently and  giving birth to a (random) number of offspring in the next generation. These random  offsprings  have all the same probability distribution. 

In the continuous time setting, a similar model can also be introduced.  In this case,  each individual possesses an exponential clock that when it rings the individual dies and is replaced by a random number of offsprings. The number of offsprings and the exponential clock associated to each individual are independent and identically distributed. This model possesses overlapping generations  and  it is known  as BGW process in continuous time. Since we are only interested in  the continuous time setting,  we will refer to them as BGW processes and omit the word continuous time.

In order to make this probabilistic model more realistic, many authors have introduced different types of   density-dependence to branching processes (see for instance Jagers \cite{Jagers},  Lambert \cite{Lambert2005} and the references therein). One approach consists in generalising the birth and death rates of continuous time branching processes by considering polynomial rates as functions of the population size. This way of modelling density dependence seems to be  popular in the biology community (see for instance Mattis and Kiffe \cite{MK} and  N\aa sell \cite{Na}).

In this manuscript, we follow  this  approach by considering polynomial rates as functions of the population size that  can be interpreted as different type of interactions between individuals. To be more precise, we are interested in a model that considers several specific phenomena such as  (pairwise) \textit{competition pressure, annihilation and  cooperation}; and  interaction among several individuals that can be considered as  \textit{catastrophes}. We call this family of processes as {\it branching processes with interactions.}

Before we provide a formal definition of   \textit{branching processes with interactions}, we fix some notation and  recall some examples  that have already appeared in the literature. We  introduce $\mathbb{N}:=\{1,2,\dots\}$, the set of strictly positive integers, $\mathbb{N}_0:=\mathbb{N}\cup\{0\}$ the set of non negative integers,  $\overline{\mathbb{N}}:=\mathbb{N}_0\cup\{\infty\}$ the compactification of $\mathbb{N}_0$ induced by the metric $d(n,m)=|1/n-1/m|$, $\mathbb{R}_+:=(0,\infty)$, the set of strictly positive real valued numbers,  and $\mathbb{R}_{+, 0}:=[0,\infty)$.

Our first example  is the so-called logistic branching process which was deeply studied by Lambert in \cite{Lambert2005}. In this model each  individual produces a random number of offspring  independently of each other,  similarly to  BGW processes, but also considers competition pressure, in other words  each pair of individuals interacts at a fixed rate and one of them is killed as result of this interaction. The logistic branching process $L=(L_t, t\in \mathbb{R}_{+, 0})$,  with positive (i.e. in $\mathbb{R}_{+, 0}$) parameters $c$ and $(\pi_i, i\in \mathbb{N})$ such that $\sum_{i\in \mathbb{N}} \pi_i=\rho> 0$, is a continuous time Markov chain with values in $\overline{\mathbb{N}}:=\mathbb{N}\cup\{0, \infty\}$ with infinitesimal generator $Q=(q_{i,j})_{i, j\in \overline{\mathbb{N}}}$ where
\[
q_{i,j}=\left\{ \begin{array}{ll}
i\pi_{j-i}, & \textrm{ if  } i\in \mathbb{N}\textrm{ and } j>i,\\
di+ci(i-1) & \textrm{ if  } i\in \mathbb{N}\textrm{ and } j=i-1,\\
-i(d+\rho+c(i-1)) & \textrm{ if  } i\in \mathbb{N}\textrm{ and } j=i,\\
0 & \textrm{ otherwise. }
\end{array}
\right .
\]
Observe that $\pi_i/\rho$ represents the probability of having $i$ new individuals born at each reproduction event. It is important to note that the states $\{0,\infty\}$ are absorbing states. Moreover, from  the main results in Lambert \cite{Lambert2005} (see Theorems 2.2 and 2.3), we deduce that  the log-moment condition
\[
\sum_{i\in\mathbb{N}} \log(i)\pi_i<\infty,
\]
is  sufficient  so that  the logistic branching process $L$  does not explode in finite time, almost surely.

It turns out that the logistic branching process is also useful in the field of population genetics. Indeed, it appears in a {\it duality} relationship with the frequency of a phenotype with selective disadvantage in a given population which can be modelled by the following  stochastic differential equation (SDE) with values in $[0,1]$,
$$
X_t=x-\rho\int_0^t X_s(1-X_s)\ud s+\int_0^t\sqrt{X_s(1-X_s)}\ud B_s, 
$$
where $x\in [0,1], \rho> 0$ and $B=(B_t, t\in \R_{+, 0})$ is a standard Brownian motion (see Krone and Neuhauser \cite{KN1, KN2}).  Krone and Neuhauser observed that one can study the above  SDE using the block counting process of the ancestral selection graph which turns out to be a  particular case of the logistic branching process. Namely, if we take $c=1$, $d=0$, $\pi_1=\rho$ and $\pi_i=0$,  for all $i\in\mathbb{N}\setminus\{1\}$, in the logistic branching process $L$ defined before, the moments of $X=(X_t, t\in \mathbb{R}_{+, 0})$ can be written in terms of the moment-generating function of $L$ as follows
$$
\mathbb{E}_x[X_t^n]=\mathbf{E}_n[x^{L_t}], \qquad x\in [0,1], \quad n\in \mathbb{N},\quad t\in \mathbb{R}_{+, 0}, 
$$ 
where $\mathbb{E}_x$ and $\mathbf{E}_n$ denote the expectations of $X$ starting from $x$ and $L$ starting from $n$ individuals, respectively.

The above relationship is known as {\it moment duality}  and appears between  many interesting branching processes with interactions and frequency processes that arise in population genetics. For instance, Athreya and Swart \cite{AS} considered the following moment duality: let $\rho, c,d\in \mathbb{R}_{+, 0}$ and denote by $C=(C_t, t\in \mathbb{R}_{+, 0})$  for the process that counts the number of particles of the branching-coalescing process defined by the initial value $C_0=n$ and the following dynamics; each particle splits into two particles at rate $\rho$, each particle dies at rate $d$ and each ordered pair of particles coalesce into one particle at rate $c$. All these events occur independently of each other. The authors in \cite{AS} called this process as the $(1, \rho,c,d)$-braco-process. Note that the braco-process is also a  particular case of the logistic branching process with parameters $c, d,  \pi_1=\rho$ and $\pi_i=0$ for all $i\in \mathbb{N}\setminus\{1\}$. Its dual process $X=(X_t, t\in\mathbb{R}_{+,0})$ is the unique strong solution taking values in $[0,1]$ of the SDE
\[
 X_t=x_0+\int_0^t (\rho-d)X_s\ud s -\int_0^t\rho X^2_s\ud s +\int_0^t \sqrt{2cX_s(1-X_s)}\ud B_s, \qquad t\in \mathbb{R}_{+,0}, 
\]  
where $x_0\in[0,1]$ and $B$ is a standard Brownian motion. Athreya and Swart called this process the resampling-selection process with selection rate $\rho$, resampling rate $c$ and mutation rate $d$ or shortly the $(1,\rho,c,d)$-resem-process. In particular, they observed the following moment duality
$$
\mathbb{E}_x[(1-X_t)^n]=\mathbf{E}_n[(1-x)^{C_t}], \qquad x\in [0,1], \quad n\in \mathbb{N},\quad t\in \mathbb{R}_{+,0}, 
$$ 
where $\mathbb{E}_x$ and $\mathbf{E}_n$ denote the expectations of $X$ starting from $x$ and $C$ starting from $n$ individuals, respectively. Recently, Alkemper and Hutzenthaler \cite{AH}  derived a unified stochastic picture for the moment duality between the resampling-selection model with the branching coalescing particle process of Athreya and Swart. It is important to note that the previous duality relationships include the moment duality between the Wright-Fisher diffusion and the so-called Kingman's coalescent.

Other type of duality relationships have been considered for haploid population models and two-sex population models by M\"ohle \cite{mohle1} and for the Wright-Fisher diffusions with $d$-types  and the Moran model (both in presence and absence of mutation) by Carinci et al. \cite{CGGR}.

 Due to the power of this relationship, the question of which models allow a moment duality is interesting on its own right. In Section 2.1,   we provide conditions for this moment duality to hold  for a large family of branching processes with interactions that include existing examples in the literature. However, the aim of this manuscript  is the long term behaviour of branching processes with interactions and we use the moment duality technique as a tool to determine the invariant distribution whenever it exists. In particular, we  consider interactions that had appeared independently in the literature, and that had been studied before using moment duality.

As in the examples of above, we are  interested in branching processes where individuals die and reproduce, as in the BGW  process,  but also are allowed to have different types of interactions. To be more precise,  our model has the following dynamics: 
\begin{itemize}
\item[i)] {\it death:} each individual in the population dies at rate $d\in\mathbb{R}_{+,0}$, 
\item[ii)] {\it reproduction:} each individual produces $i$ new individuals at rate $\pi_i\in \mathbb{R}_{+,0}$, for $i\in \mathbb{N}$., 
\item[iii)] {\it competition pressure:} each pair of individuals interact at a fixed rate $c\in \mathbb{R}_{+,0}$ and one of them is killed as result of this interaction (see for instance Athreya and Swart \cite{AS} and Lambert \cite{Lambert2005}),
\item[iv)] {\it annihilation:} each   pair of individuals interact at a fixed rate $a\in \mathbb{R}_{+,0}$ and both of them are killed as result of this interaction (see for instance Athreya and Swart \cite{AS12} and Blath and Kurt \cite{BK11}), and
\item[v)] {\it cooperation:} each pair of individuals interact and produce $i$ new individuals at rate $b_i\in \mathbb{R}_{+,0}$, for $i\in \mathbb{N}$ (see for instance Sturm and Swart \cite{SS15}).
\end{itemize}
Finally, we consider interactions among several individuals in the sense of $\Lambda$-coalescent events as in  Foucart \cite{F13} and Griffiths \cite{G12}  that we call {\it catastrophes}.  Let $\Lambda$ be a finite measure on $[0,1]$. If $n$ individuals are present in the population, each $k-1$-tuple die simultaneously  at rate 
$$
\lambda_{n,k}:=\int_0^1y^{k}(1-y)^{n-k}\frac{\Lambda(\ud y)}{y^2}, \qquad \textrm{for}\quad  k\in\{2,3, \ldots, n\}.
$$
We refer to Pitman \cite{Pitman} and  Sagitov \cite{Sagitov} for a proper definition of $\Lambda$-coalescent events and  processes.

Along the paper, we assume that all the parameters $d, c, a, b_i,  \pi_i,$ for $i\in \mathbb{N}$ are positive (i.e. belong to $\mathbb{R}_{+,0}$) and  $\Lambda$ is a finite measure on $[0,1]$, such that 
\[
0\le \rho:=\sum_{i\in \mathbb{N}} \pi_i<\infty, \qquad 0\le \lambda_i:=\sum_{k= 2}^i \binom{i}{k}\lambda_{i,k} \quad \textrm{ and }\quad 0\le b:=\sum_{i\in \mathbb{N}} b_i<\infty.
\]
Note that when $\rho=0$ (similarly when $b=0$), then $\pi_i=0$ $(b_i=0)$, for all $i\in \mathbb{N}$.

The  branching process with interactions $Z=(Z_t, t\in \mathbb{R}_{+,0})$  is a continuous time Markov chain with values in $\overline{\mathbb{N}}$ whose extended generator $Q=(q_{i,j})_{ i, j\in \overline{\mathbb{N}}}$ is given by
\[
q_{i,j}=\left\{ \begin{array}{ll}
i\pi_{j-i}+i(i-1)b_{j-i}, & \textrm{ if  } i\in\mathbb{N} \textrm{ and } j>i,\\
di+c i(i-1) +\binom{i}{2}\lambda_{i,2}& \textrm{ if  } i\in\mathbb{N}\setminus\{1\}  \textrm{ and } j=i-1,\\
ai(i-1) +\binom{i}{3}\lambda_{i,3}& \textrm{ if  } i\in  \{3,4,\ldots\} \textrm{ and } j=i-2,\\
\binom{i}{k}\lambda_{i,k} & \textrm{ if  }i\in\{4, \ldots\}, k\in\{4, \ldots i\}\textrm{ and } j=i-k+1,\\
-i(d+\rho)-i(i-1)(b+c+a)-\lambda_{i} & \textrm{ if  } i\in \mathbb{N}\setminus\{1\}\textrm{ and } j=i,\\
2a& \textrm{ if  } i=2 \textrm{ and } j=0,\\
d & \textrm{ if  } i=1  \textrm{ and } j=0,\\
-d-\rho & \textrm{ if  } i=1\textrm{ and } j=1,\\
0 & \textrm{ otherwise. }
\end{array}
\right .
\]
Equivalently, the extended generator $Q$ of $Z$ acts as follows. For  $f\in C_0(\overline{\mathbb{N}}, \R)$,
the set of continuous functions from $\overline{\mathbb{N}}$ to $\mathbb{R}$  which vanish at infinity, we have 
\[
Qf(0)=Qf(\infty)=0,\qquad Qf(1)=d\Big(f(0)-f(1)\Big)+\sum_{i\in \mathbb{N}}\pi_i\Big(f(1+i)-f(1)\Big),
\] 
and for all $n\in \mathbb{N}\setminus\{1\},$
\begin{equation}\label{GeneratorLB}
\begin{split}
Qf(n)&=d n\Big(f(n-1)-f(n)\Big)+\sum_{i\in \mathbb{N}}n\pi_i\Big(f(n+i)-f(n)\Big)\\
&+n(n-1)a\Big(f(n-2)-f(n)\Big)+n(n-1)c\Big(f(n-1)-f(n)\Big)\\
&+n(n-1)\sum_{i\in \mathbb{N}} b_i\Big(f(n+i)-f(n)\Big)+\sum_{k=2}^n\binom{n}{k}\lambda_{n,k}\Big(f(n-k+1)-f(n)\Big),
\end{split}
\end{equation}
 whenever it is well-defined. For instance  the domain of the extended generator $Q$, here denoted by $\mathcal{D}(Q)$, includes the set of linear combinations of exponentials functions  of the form 
 \begin{equation}\label{lcexp}
 f(n)=\sum_{\ell=1}^k a_\ell e^{-m_\ell n}
 \end{equation} for $a_1,a_2,\ldots, a_k\in \mathbb{R}$, $m_1, m_2, \ldots, m_k\in\R_+ $, and $k\in\mathbb{N}$, which is dense in $C_0(\overline{\mathbb{N}}, \R)$ by the Stone-Weierstrass Theorem for locally compact sets. Indeed, the latter claim follows from the construction in Section 4.2  and Problem 15 (a) in Section 4.11 in \cite{EK} (we leave the details to the interested reader).

For our purposes, we introduce the following parameters
\[
\mathbf{m}:=-d+\sum_{i\in \mathbb{N}} i\pi_i \qquad \textrm{and}\qquad \varsigma:=-c-2a+\sum_{i\in \mathbb{N}} ib_i,
\]
 that we assume to be finite. As we will see below, we can characterise the long term behaviour of branching processes with interactions depending on the value of what we call the {\it cooperative parameter} $\varsigma\in \mathbb{R}$. We say that a branching process with interactions is {\it supercritical, critical or subcritical cooperative} accordingly as $\varsigma>0$,  $\varsigma=0$ or $\varsigma<0$.

In order to understand the long term behaviour for this family of processes, we  use stability theory for continuous time Markov chains (see for instance Tweedie \cite{Twee}, Chen \cite{Chen}, Meyn and Tweedie \cite{stability0} and the notes of Hairer \cite{Ha}). Our first main result says that a branching process with interactions $Z$ which is subcritical or critical cooperative, i.e. $\varsigma\le 0$,  is conservative or in other words that it does not explode in finite time.  In the particular case when there are no catastrophes and $\varsigma>0$,  then the process $Z$ explodes in finite time with positive probability. 
 
 In the sequel, we denote by $\mathbf{P}_n$  for the law of the process $Z$ starting from $n\in\mathbb{N}_0$. Since $\{0\}$ is an absorbing state, we only consider that $Z_0$ takes values in $\mathbb{N}$, almost surely.
 \begin{theorem}\label{Propfinitness} Let $Z$ be a  branching process with interactions such that $|\mathbf{m}|<\infty$.
If the process is subcritical or critical cooperative, i.e. $\varsigma\le 0$,   then the process  $Z$ is conservative, i.e.
\[
\mathbf{P}_n(Z_t<\infty)=1, \qquad \textrm{for any }\quad n\in \mathbb{N}, \,\,t\in \mathbb{R}_{+,0}.
\]
Moreover if there are no catastrophes, i.e. $\Lambda\equiv 0$,  and $Z$ is supercritical cooperative  (i.e. $\varsigma>0$)   starting at $Z_0\in\{2,3,...\}$ almost surely, then  the process $Z$ explodes in finite time with positive probability.
\end{theorem}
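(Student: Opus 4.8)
The statement splits into two essentially independent halves, and the plan is to handle each via a suitable Lyapunov-type function together with the stability theory for continuous time Markov chains quoted in \cite{Twee,Chen,stability0,Ha}. Both halves rest on the fact that the generator \eqref{GeneratorLB} is explicit on polynomials: applying $Q$ to $V(n)=n$ (legitimate because $\sum_i i\pi_i<\infty$, $\sum_i ib_i<\infty$ and each $\lambda_n<\infty$) one gets, by a direct computation,
\begin{equation*}
QV(n)=\mathbf{m}\,n+\varsigma\,n(n-1)-\sum_{k=2}^{n}\binom{n}{k}\lambda_{n,k}(k-1),\qquad n\ge 1,
\end{equation*}
where the last (catastrophe) term is nonnegative because $\Lambda$-events only destroy individuals.

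For the first assertion I would simply feed $V(n)=n$ into the non-explosion criterion. When $\varsigma\le 0$ the quadratic term $\varsigma n(n-1)$ is $\le 0$ and the catastrophe term is $\ge 0$, so $QV(n)\le \mathbf{m}\,n\le \gamma\,V(n)$ with $\gamma:=\max(\mathbf{m},0)\ge 0$, for all $n\ge 1$. Since $V(n)\to\infty$, the classical drift condition $QV\le\gamma V$ for a norm-like $V$ (see e.g. \cite{stability0,Ha,Chen}) forces the minimal process to be conservative, i.e. $\mathbf{P}_n(Z_t<\infty)=1$ for all $n,t\ge 0$. This half is short; the only points needing a line of justification are that $V(n)=n$ lies in the (extended) domain of $Q$ and that the criterion applies with $\gamma$ possibly $0$.

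For the second assertion the plan is to use the \emph{bounded and strictly decreasing} test function $f(n)=1/n$ ($n\ge 1$, with $f(0):=0$), extended continuously to the explosion by $f(\infty):=0$. With $\Lambda\equiv 0$ the generator has no catastrophe term, and the analogous computation yields, as $n\to\infty$,
\begin{equation*}
Qf(n)=\frac{d}{n-1}-\sum_i\frac{\pi_i i}{n+i}+\frac{2a(n-1)}{n-2}+c-(n-1)\sum_i\frac{b_i i}{n+i}\ \longrightarrow\ 2a+c-\sum_i ib_i=-\varsigma<0 ,
\end{equation*}
the convergence of the two series being a routine dominated/monotone-convergence argument using $\mathbf{m},\varsigma<\infty$. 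Hence there is a finite $N_0\ge 3$ with $Qf(n)\le-\varsigma/2$ for every $n\ge N_0$. Fixing $n_0\ge N_0$, writing $\zeta$ for the explosion time and $\sigma:=\inf\{t\ge 0:\ Z_t<N_0\}$, I would observe that on $[0,\sigma\wedge\zeta)$ the process stays in $[N_0,\infty)$, so $M_t:=f(Z_{t\wedge\sigma})+\tfrac{\varsigma}{2}(t\wedge\sigma)$ has nonpositive drift there; localizing at the passage times of $\{n\ge k\}$ and using $0\le f\le 1$, $M$ is a bounded supermartingale up to $\zeta$. Optional stopping gives $\tfrac{\varsigma}{2}\,\mathbf{E}_{n_0}[\sigma\wedge\zeta]\le f(n_0)=1/n_0$, so $\sigma\wedge\zeta<\infty$ $\mathbf{P}_{n_0}$-a.s., while the plain supermartingale $f(Z_{t\wedge\sigma})$ (with $f(Z_\sigma)\ge 1/(N_0-1)$ on $\{\sigma<\zeta\}$ and $f\to f(\infty)=0$ along explosion) gives $\mathbf{P}_{n_0}(\sigma<\infty)\le (N_0-1)/n_0$. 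Combining, $\mathbf{P}_{n_0}(\zeta<\infty)\ge 1-(N_0-1)/n_0>0$ for every $n_0\ge N_0$; and since from any $n\ge 2$ a finite string of cooperative (or branching) events reaches $[N_0,\infty)$ with positive probability, the strong Markov property upgrades this to: $Z$ explodes in finite time with positive probability started from any $n\ge 2$.

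The conservativeness direction has no real obstacle. The work is in the explosion direction, whose two load-bearing steps are: (i) the limit $Qf(n)\to-\varsigma$ — this is exactly where $\Lambda\equiv 0$ is used, since a catastrophe term would contribute $\sum_{k}\binom{n}{k}\lambda_{n,k}\big(f(n-k+1)-f(n)\big)\ge 0$ to $Qf$ (downward jumps push $1/Z$ up) and could destroy the strict negativity, so that explosion genuinely fails in general once catastrophes are present; and (ii) converting "strictly negative drift of $f$ on $[N_0,\infty)$" into "leave $[N_0,\infty)$ or explode, in finite expected time", which is the role of the added clock $\tfrac{\varsigma}{2}(t\wedge\sigma)$. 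I expect the main technical nuisance to be making the supermartingale/optional-stopping steps rigorous in the presence of a possible explosion (the localization at the passage times of $\{n\ge k\}$ and the continuous extension of $f$ to $\infty$), together with checking absolute convergence of $Qf(n)$; note that since we only assume $\sum_i ib_i<\infty$ there is no second-moment control on the cooperative jumps, which is precisely why a one-sided supermartingale comparison, rather than a variance/CLT-type estimate, is the natural tool here.
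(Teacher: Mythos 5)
Your first half coincides with the paper's argument: the linear Lyapunov function $V(n)=n$, the identity $QV(n)=\mathbf{m}\,n+\varsigma\,n(n-1)-\sum_{k=2}^{n}(k-1)\binom{n}{k}\lambda_{n,k}\le \mathbf{m}\,n$, and the Chen/Tweedie/Meyn--Tweedie drift criterion for non-explosion; there is nothing to add there. Your second half is correct but takes a genuinely different route. The paper first reduces to $d=\rho=0$, represents $Z$ as a compound Poisson process time-changed by $\int_0^t Z_s(Z_s-1)\,\ud s$, exhibits an a.s.\ exploding comparison process (pure binary cooperation at rate $\delta\in(0,\varsigma)$, whose expected explosion time $\delta^{-1}\sum_{i\ge 2}1/(i(i-1))$ is finite), and shows stochastic domination on the positive-probability event that the difference of the two driving compound Poisson processes stays positive; the case $d+\rho>0$ then needs a second, separate coupling that absorbs the linear death rate into an inflated competition rate above a level $n_0$. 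You instead run a single Lyapunov-for-explosion argument with the bounded function $f(n)=1/n$, $f(\infty)=0$: the computation $Qf(n)\to-\varsigma<0$ (which, as you correctly stress, is exactly where $\Lambda\equiv 0$ is used, the catastrophe contribution to $Qf$ being nonnegative), the localized supermartingale $f(Z_{t\wedge\sigma})+\tfrac{\varsigma}{2}(t\wedge\sigma)$ giving $\sigma\wedge\zeta<\infty$ a.s., the bound $\mathbf{P}_{n_0}(\sigma<\zeta)\le (N_0-1)/n_0$ forcing $\mathbf{P}_{n_0}(\zeta<\infty)>0$, and an irreducibility step to reach all $n\ge 2$. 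Your route treats all parameter configurations (in particular $d+\rho>0$) in one sweep and dispenses with both of the paper's couplings; the paper's route is more constructive, producing an explicit subprocess that explodes a.s.\ together with a bound on its expected explosion time. The two points you should make fully explicit in a final write-up are the ones you already flag: Dynkin's formula must be applied along the passage times $T_k$ of $\{n\ge k\}$ (where all rates are bounded) before letting $k\to\infty$, and, since with $\Lambda\equiv 0$ the downward jumps have size at most $2$, starting from $n_0\ge N_0\ge 3$ one has $Z_{\sigma}\in\{N_0-2,N_0-1\}$ on $\{\sigma<\zeta\}$, so $f(Z_\sigma)\ge 1/(N_0-1)$ holds and the convention $f(0)=0$ is never invoked.
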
  

The proof of the first part of our previous result follows from  a Lyapunov type condition for non-explosion found in Chen \cite{Chen} and  for the supercritical case, we use a coupling argument. The supercritical cooperative regime seems to be more involved under the event of catastrophes. Nonetheless, we believe that there must be cases when the process may explode under the presence of catastrophes. 
 
In the sequel, we assume that the process satisfies $|\mathbf{m}|<\infty$ and that it is subcritical cooperative, i.e $\varsigma<0$, unless we state specifically otherwise. It is important to note that  when there is no annihilation, i.e. $a=0,$ the previous assumption implies that $c>0$.

Our next result characterises the long term behaviour of $Z$. For simplicity, we only deal with the case with no annihilation since  the annihilation case is more involved and will be studied at the end of this section.

\begin{proposition}  \label{FLyap} Assume  $|\mathbf{m}|<\infty$ and that $Z$  is a subcritical cooperative branching process with interactions, i.e. $\varsigma<0$,  such that $\mathbb{P}(Z_0\in\mathbb{N})=1$. If there is no annihilation, i.e. $a=0$, we have
\begin{itemize}
\item[i)] if  $d=0$ and $\rho=0$, then  $Z$ is absorbed in the state $\{1\}$ and $\{0\}$  is not accessible,
\item[ii)] if $d=0$ and $\rho>0$,   then $\{0\}$ is not accessible and $Z$ is positive recurrent in $\mathbb{N}$,
\item[iii)] if $d>0$, then $Z$ is absorbed in the state $\{0\}$.
\end{itemize}
\end{proposition}

The proof of part (ii) of  our previous result follows from   a Foster-Lyapunov conditions for positive recurrence and the remaining cases follows from  a coupling argument. Recurrence in the critical cooperative case does not seems easy to handle.  Actually a different approach than  the one we present here for the subcritical cooperative case is needed and we conjecture that the criteria for recurrence not only depends  on  the cooperative parameters but also on the branching parameters.   Further developments on the critical case  appear in Gonz\'alez-Casanova et al. \cite{GCP}  where a particular example is treated.

 When $a=0=d$ and $\rho>0$, the state $\{0\}$ is not accesible and since $Z$ is irreducible (as a process with values in $\mathbb{N}$) and positive recurrent then there exist a unique stationary distribution (see for instance Theorem 21.14 in \cite{LPW}). Thus a natural question arises: {\it can we determine the invariant distribution of $Z$?} In order to provide a positive answer to this question, we first introduce the moment dual of $Z$ which also exist in the critical cooperative case.

The unique moment dual of the branching process  with interactions $Z$ is a jump-diffusion taking values in $[0,1]$ that can be defined as the unique strong solution of the following stochastic differential equation (SDE for short)
\begin{equation}\label{eq:dualnaintro}
\ud X_t=\mu(X_{t})\ud t+\sigma(X_t)\ud B_t +\int_{(0,1]}\int_{(0,1]}z\Big(\mathbf{1}_{\{u\le X_{t-} \}}-X_{t-}\Big) \widetilde{N}(\ud t,\ud z,\ud u),
\end{equation}
with initial condition $X_0=x\in[0,1]$ and where $\widetilde{N}$ is a compensated  Poisson random measure on  $\mathbb{R}_{+,0}\times (0,1]^2$ with intensity $\ud t z^{-2}\Lambda(\ud z) \ud u$, the functions $\mu:[0,1]\rightarrow \R$ and $\sigma: [0,1]\rightarrow \mathbb{R}_{0, +}$ are continuous and  satisfy
\[
\mu(z):=d(1-z)+\sum_{i\in \mathbb{N}}\pi_{i}(z^{i+1}-z)
\quad\textrm{and}\quad \sigma(z):=\sqrt{2c(z-z^2)+2\sum_{i\in \mathbb{N}} b_i(z^{i+2}-z^2)}.
\]
We denote by $\mathbb{P}_x$, for the law of the process $X=(X_t, t\in \mathbb{R}_{0,+})$ starting from $x\in [0,1]$.

\begin{theorem}\label{lemma:characterization} Assume that $|\mathbf{m}|< \infty$, $a=0$ and $\varsigma \le 0$, then the SDE (\ref{eq:dualnaintro}) with starting point $X_0\in[0,1]$ has a unique strong solution, that we denote by $X$,  taking values on $[0,1]$. Moreover $X$  is the unique (in distribution) moment dual of $Z$, the (sub)critical cooperative  branching process with  interactions  having the same parameters as $X$. More precisely, for $x\in [0,1]$ and $n\in\mathbb{N}_0$, we have 
$$
\mathbb{E}_x[X_t^n]=\mathbf{E}_n[x^{Z_t}]\qquad\text{for}\quad  t\in \mathbb{R}_{+,0}. 
$$
\end{theorem}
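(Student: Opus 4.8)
The plan is to treat the three assertions in turn: strong well-posedness of the SDE inside $[0,1]$, an algebraic identity between the generator $\mathcal L$ of $X$ and the generator $Q$ of $Z$ tested against the function $(x,n)\mapsto x^{n}$, and the deduction of the moment duality (and its uniqueness) from that identity together with the non-explosion of $Z$ provided by Theorem~\ref{Propfinitness}. For well-posedness I would first record that $\mu$ and $\sigma^{2}$ are bounded and continuous on $[0,1]$, via the factorisations $\mu(x)=(1-x)\big[d-\sum_{i}\pi_{i}(x+\cdots+x^{i})\big]$ and $\sigma^{2}(x)=x(1-x)\big[c-\sum_{i}b_{i}(x+\cdots+x^{i})\big]$; here the subcritical cooperative hypothesis enters for the first time, since $c-\sum_{i}b_{i}(x+\cdots+x^{i})\ge c-\sum_{i}ib_{i}=-\varsigma\ge0$ (recall $a=0$), so that $\sigma^{2}\ge0$ and $\sigma=\sqrt{\sigma^{2}}$ is a genuine continuous function on $[0,1]$. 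Both $\mu$ and $\sigma^{2}$ are Lipschitz on $[0,1]$ (polynomials on a compact interval), and since $\sigma\ge0$ one gets $|\sigma(x)-\sigma(y)|^{2}\le|\sigma^{2}(x)-\sigma^{2}(y)|\le L|x-y|$, so $\sigma$ is $\tfrac12$-Hölder, a Yamada--Watanabe modulus of continuity. The jump coefficient $z(\mathbf 1_{\{u\le x\}}-x)$ is of the resampling type appearing in $\Lambda$-Fleming--Viot equations: it is bounded, it is square-integrable after compensation since $\int_{(0,1]}z^{2}\,z^{-2}\Lambda(\ud z)=\Lambda((0,1])<\infty$, and its only irregularity in $x$ (the indicator) produces merely an $L^{2}$-modulus $\int_{0}^{1}|(\mathbf 1_{\{u\le x\}}-x)-(\mathbf 1_{\{u\le y\}}-y)|^{2}\,\ud u\le 2|x-y|$, again of Yamada--Watanabe type. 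Strong existence and pathwise uniqueness then follow by combining Yamada--Watanabe for the continuous part with the theory of jump-type stochastic equations of this kind (in the spirit of Dawson--Li, Fu--Li, Li--Pu); the solution never leaves $[0,1]$ because every jump sends $X_{t-}$ either to $X_{t-}(1-z)+z$ or to $X_{t-}(1-z)$, both in $[0,1]$, while $\mu(0)=d\ge0=\sigma(0)$ and $\mu(1)=\sigma(1)=0$, so a comparison argument confines the diffusive motion.

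Next I would establish the generator identity. With $H(x,n)=x^{n}$, and since $x\mapsto x^{n}$ is $C^{2}$ on $[0,1]$ hence in the domain of $\mathcal L$, one checks term by term that
\[
\mathcal L H(\cdot,n)(x)=\sum_{j}q_{n,j}\,x^{j}\qquad\text{for all }n\in\overline{\mathbb N},\ x\in[0,1],
\]
with $q_{n,j}$ as in \eqref{GeneratorLB}: $\mu(x)(x^{n})'$ reproduces the linear death and branching terms, $\tfrac12\sigma^{2}(x)(x^{n})''$ reproduces the quadratic competition and cooperation terms (the diffusion coefficient is chosen exactly so that this holds), and the compensated jump operator, after integrating out the $u$-variable---which annihilates the compensator---collapses to the $\Lambda$-coalescent block-counting rates $\sum_{k=2}^{n}\binom{n}{k}\lambda_{n,k}(x^{n-k+1}-x^{n})$; this last point is the classical moment duality between the $\Lambda$-Fleming--Viot process and the $\Lambda$-coalescent.

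From the identity I would conclude as follows. By Dynkin's formula $t\mapsto m_{n}(t,x):=\E_{x}[X_{t}^{\,n}]$ is absolutely continuous and, by Fubini (legitimate since $\sum_{j}|q_{n,j}|=2|q_{n,n}|<\infty$ for each $n$), it satisfies $\dot m_{n}=\sum_{j}q_{n,j}m_{j}$ with $m_{n}(0,x)=x^{n}$ and $|m_{n}|\le1$. On the other hand $u_{n}(t,x):=\mathbf E_{n}[x^{Z_{t}}]$---well defined because $Z$ is conservative by Theorem~\ref{Propfinitness}---solves, by Kolmogorov's backward equation, the \emph{same} linear system with the \emph{same} initial data and is likewise bounded by $1$. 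Since $Z$ is non-explosive, this backward Cauchy problem has a unique bounded solution (Reuter's criterion), whence $m_{n}=u_{n}$, that is $\E_{x}[X_{t}^{\,n}]=\mathbf E_{n}[x^{Z_{t}}]$. For uniqueness of the dual: if $Y$ is any moment dual of $Z$, then for each $x$ and $t$ the laws of $X_{t}$ and $Y_{t}$ started from $x$ have identical moments, and a probability measure on the compact interval $[0,1]$ being determined by its moments, $Y_{t}\ed X_{t}$ for all $t$ and all starting points; thus the moment-duality relation pins down $\mathbb P_{x}(X_{t}\in\cdot)$---hence all finite-dimensional distributions of a Markov dual---uniquely.

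The step I expect to be the main obstacle is twofold. First, the strong well-posedness of the SDE: the square-root diffusion coefficient precludes the Lipschitz theory and forces the Yamada--Watanabe argument, while the jump term requires the corresponding pathwise-uniqueness theory for $[0,1]$-valued equations driven by a (possibly infinite-activity) Poisson measure. Second, in the duality, the uniqueness of the bounded solution of the backward system---this is precisely where the non-explosion of $Z$ (Theorem~\ref{Propfinitness}, and the reason the hypothesis $\varsigma\le0$ reappears) is indispensable, for without it the backward equation admits several bounded solutions and the moment identity can genuinely fail.
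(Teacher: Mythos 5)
Your proposal is correct, and its overall architecture (well-posedness of the SDE on $[0,1]$, generator identity tested against $H(x,n)=x^{n}$, then upgrading to the duality) coincides with the paper's, which splits the statement into Lemmas \ref{lemma3} and \ref{lemma1}. For the SDE you use the same ingredients as the paper: Lipschitz drift, $1/2$-H\"older $\sigma$ obtained from the Lipschitz continuity of $\sigma^{2}\ge 0$, and the Yamada--Watanabe-type conditions for jump equations of Li and Pu \cite{LiPu}; your factorisation $\sigma^{2}(x)=x(1-x)\bigl[c-\sum_{i}b_{i}(x+\cdots+x^{i})\bigr]\ge -\varsigma\, x(1-x)$ is a welcome addition, since it makes explicit where the hypothesis $\varsigma\le 0$ is actually used (the paper states the hypothesis in Lemma \ref{lemma3} but never points out that it is what guarantees $\sigma^{2}\ge0$). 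The generator computation is the same as the paper's, which carries out the $\Lambda$-part via a Bernoulli random variable and a binomial expansion. The one genuinely different step is the passage from $\mathcal{A}(x^{n})=Q(x^{n})$ to $\E_{x}[X_{t}^{n}]=\mathbf{E}_{n}[x^{Z_{t}}]$: the paper invokes the abstract generator criterion of Jansen and Kurt (Proposition \ref{prop:provingduality}, i.e.\ Proposition 1.2 of \cite{JK14}), which requires verifying the domain conditions $H(x,\cdot),P_{t}H(x,\cdot)\in\mathcal{D}$, whereas you argue directly that both $m_{n}(t,x)=\E_{x}[X_{t}^{n}]$ and $u_{n}(t,x)=\mathbf{E}_{n}[x^{Z_{t}}]$ are bounded solutions of the same backward system $\dot v_{n}=\sum_{j}q_{n,j}v_{j}$ and conclude by uniqueness of bounded solutions for the regular $Q$-matrix (Reuter's criterion), which is where non-explosion from Theorem \ref{Propfinitness} enters. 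Your route is more self-contained and makes the role of conservativity transparent; the paper's route outsources exactly this issue to \cite{JK14} and is shorter on the page. The uniqueness of the dual is handled identically in substance (moments determine laws on the compact $[0,1]$; the paper cites Proposition 3.6 of \cite{JK14} for the same point). No gaps.
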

\begin{remark}
 Recall that $a=0$. The assumption $\varsigma \le 0$,  guarantees that 
 \[
 c(x-x^2)+\sum_{i\in \mathbb{N}} b_i(x^{i+2}-x^2)\geq 0 \qquad \textrm{for all}\quad  x\in[0,1],
 \] 
 and thus $\sigma:[0,1]\mapsto \R_{+,0}$. Indeed, the latter follows since  \[
c(x-x^2)+\sum_{i\in \mathbb{N}} b_i(x^{i+2}-x^2)= x(1-x)\left(c-\sum_{i\in \mathbb{N}} b_i\sum_{j=0}^{i-1} x^{j+1}\right) ,
 \]
implying that the previous quantity is always positive for any $x\in[0,1]$, if the mapping 
\[
x\in [0,1]\mapsto g(x):=c-\sum_{i\in \mathbb{N}} b_i\sum_{j=0}^{i-1} x^{j+1},
\]
 is positive. On the other hand, the function $g$ is decreasing on $[0,1]$ and satisfies $g(0)=c$.  Under the assumption that $\varsigma< 0$, we have that $c> 0$ and $g(1)=c-\sum_{i\in \mathbb{N}} ib_i=-\varsigma> 0$. If $
\varsigma=0$, we have that $c\ge  0$ (if $c=0$ then $b=0$) and $g(1)=c-\sum_{i\in \mathbb{N}}ib_i= 0$.  Moreover, the assumption that $X_0\in[0,1]$ guarantee  that the  SDE \eqref{eq:dualnaintro} take values in $[0,1]$.  
 \end{remark}
The process $X$ can be thought as  the evolution of the frequency of a trait or phenotype and some of the parameters have a classical interpretation in terms of population genetics. For instance,  $d$ represents  the rate at which a mutation affects an individual,  $\pi_1$ has been interpreted as  the weak selection parameter,  $c$ is also known as the strength of the random genetic drift and the Poisson random measure $N$ may model the occurrence of reproduction events that affect large fractions of the population. Recently, the parameters $(\pi_i, i\ge 2)$,  have been interpreted in terms of frequency dependent selection in Gonzalez-Casanova and Span\`o \cite{GS}.

The interpretation of the parameters $(b_i, i\in \mathbb{N})$ are not so classic. In  Gonzalez-Casanova et al. \cite{GCP} a biological interpretation of the parameter $b_1$ is studied and it  can be  related to the efficiency of  individuals. To be more precise, let us imagine a Wright-Fisher model where the population size is coupled with the frequency of individuals of a given type in such a way that its frequency process  $X^{(N)}=(X_n^{(N)}, n\in \mathbb{N}_0)$ is a Markov chain taking  values in $[0,1]$ such that conditionally on $X_{n-1}^{(N)}=x\in[0,1]$, the r.v. $\lfloor N_x\rfloor X_n^N$ is distributed as a Binomial r.v. with parameters $\lfloor N_x\rfloor$ and $x$, where
$$
N_x=\frac{N}{1-b_1 x},
$$
and $x\mapsto \lfloor x \rfloor$ denotes the floor function.
The interpretation of $N_x$ is related to efficiency, indeed assume that at each generation there are $N$ units of resources available and consider that there are two types of individuals: the {\it inefficient} population that consists of individuals that need $1$ unit of resources to be created and the {\it efficient} population that requires $1-b_1$ units of resources. Since, in a given generation,  the frequency of efficient individuals is $x$, the number of individuals that the next generation will be able to sustain is approximately  $N_x$.
In other words, we can interpret the process $X^{(N)}$  as the frequency of efficient individuals. Now observe that for all $x\in[0,1]$, we have
\[
\mathbb{E}\Big[X_n^{(N)}\Big| X_{n-1}^{(N)}=x\Big]=x,\qquad \mathbb{V}\mathrm{ar}\Big[X_n^{(N)} \Big| X_{n-1}^{(N)}=x\Big]=\frac{x(1-x)}{\lfloor N_x\rfloor}\sim \frac{x(1-x)(1-b_1x)}{N},
\]
as $N$ increases. From  the previous observation, it is not so difficult to guess that  the scaling limit of $(X^{(N)}_{\lfloor Nt \rfloor}, t\ge 0)$ when $N$ goes to infinity, must be the unique strong solution of the following SDE
\[
\ud X_t=\sqrt{2X_t(1-X_t)(1-b_1X_t)}\ud B_t,
\]
which belongs to the family of  diffusions  described by the SDE \eqref{eq:dualnaintro} when $c=1$, $b_1\in(0,1)$ and $\Lambda\equiv 0$. In other words, the interpretation of the parameters $(b_i, i\in \mathbb{N})$ can then be understood as frequency dependent effective population size, in the sense of Gonzalez-Casanova and Span\`o \cite{GS}.

It is important to note that recently, Foucart \cite{F13} and Griffiths \cite{G12} studied asymptotic properties of the so-called $\Lambda$-Wright-Fisher  process with selection  using the moment duality.  The latter process can be defined as the unique strong solution of the  SDE (\ref{eq:dualnaintro}) with $\mu(x)=\rho (x^2-x)$ and $\sigma(x)=0$ and its unique moment dual is a  binary branching process with $\Lambda$-catastrophes. Foucart and Griffiths were interested in understanding under which conditions does the solution of such SDE eventually goes to zero with probability one. This question can be interpreted in a biological sense  as follows: {\it under which condition fixation of the fittest phenotype is certain, in a population with skewed reproduction?}

As we mention before, the moment duality property has been used recently by  Gonzalez-Casanova and Span\`o \cite{GS}  for different purposes. The authors in \cite{GS} studied a model related to selection which happens to be moment dual to the solution of the SDE  (\ref{eq:dualnaintro}) with $c,d =0$ and $b_i=0$, for $i\in \mathbb{N}$ and  used the moment duality to understand under which conditions  fixation of the fitness phenotype is certain. It is important to note that the results that we will present here are complementary to those obtained  in Foucart \cite{F13}, Griffiths \cite{G12} and Gonzalez-Casanova and Span\`o \cite{GS}.  As we will see below fixation at 1 of the process $X$ is related to determining the invariant distribution of $Z$.

One of our main results shows that  when there is no annihilation and $d=0$ and $\rho>0$, the invariant distribution of $Z$ can be determined by the fixation at 1 of its moment dual $X$. 

\begin{theorem}\label{thm:criteria} Assume that there is no annihilation, i.e. $a=0=d$, $|\mathbf{m}|<\infty$ and $\rho>0$. Then   a subcritical cooperative branching process with  interactions  $Z$, i.e. $\varsigma< 0$
has a unique stationary distribution here denoted by $\nu$. Moreover, \begin{equation}\label{eq:stationaryuniform}
\lim_{t\rightarrow \infty} \sup_{n\in \mathbb{N}}||\mathbf{P}_n(Z_t=\cdot)-\nu(\cdot)||=0.
\end{equation}
and if  $X$ is the unique  moment dual of $Z$, then 
$\mathbb{P}_x(X_\infty\in\{0,1\})=1$ for $x\in (0,1)$, where $X_\infty:=\lim_{t\to\infty}X_t$ and the generating function associated to $\nu$ satisfies 
$$
f(x):=\sum_{n\in \mathbb{N}} x^n\nu(n)=\mathbb{P}_x(X_{\infty}=1).
$$
\end{theorem}
It is important to note that when there are no catastrophes, i.e. $\Lambda\equiv 0$, the invariant distribution of $Z$ can be determined explicitly in terms of the scale function of the diffusion  described by the SDE \eqref{eq:dualnaintro} as it is explained below.

\begin{corollary}\label{scale}
Assume that $|\mathbf{m}|<\infty$, $\varsigma< 0$ and that there is no annihilation neither catastrophes (i.e. $a=0$ and $\Lambda\equiv 0$). If $d=0$ and $\rho>0$, we have
\[
 \sum_{n\in \mathbb{N}} x^n\nu(n)=\mathbb{P}_x(X_{T_{0,1}}=1)=\frac{S(x)-S(0)}{S(1)-S(0)}, \qquad \textrm{for}\quad x\in[0,1],
\]
 where $T_{0,1}=\inf\{t\ge 0: X_t\in\{0,1\}\}$ and $S$ denotes the so-called scale function of  $X$ which satisfies
\[
S(x)=\int_{\theta_1}^x \exp\left\{-\int_{\theta_2}^y\frac{2\mu(z)}{\sigma^2(z)}\ud z\right\}\ud y,
\]
 where $\theta_1$ and $\theta_2$  are arbitrary numbers in $[0,1]$.
\end{corollary}
The previous Corollary follows from Theorem \ref{thm:criteria}, the theory of exit problems for diffusions (see for instance  Revuz and Yor \cite{RevuzYor}) and Feller's boundary test (see for instance Proposition 2.4 in \cite{MiUr}).  Indeed, from  Feller's boundary test,  the process is absorbed at  1 (respectively at 0) at finite time with positive probability if $S(1)<\infty$ ($|S(0)|<\infty$) and 
 \[
 \int^1 \frac{S(1)-S(u)}{\sigma^2(u)S^\prime(u)}\ud u<\infty \qquad \left(\int_0 \frac{S(u)-S(0)}{\sigma^2(u)S^\prime(u)}\ud u<\infty
\right). \]
Both conditions at 1 and at 0 are fulfilled in our case since
\[
\frac{2\mu(z)}{\sigma^2(z)}\sim -\frac{\rho}{c} \quad \textrm{as} \quad z\to 0,\qquad\frac{2\mu(z)}{\sigma^2(z)}\sim \frac{\mathbf{m}}{\varsigma} \quad \textrm{as} \quad z\to 1,
\]
and
\[
\frac{S(1)-S(u)}{\sigma^2(u)S^\prime(u)}\sim-\frac{1}{\varsigma} \qquad \textrm{as} \quad u\to 1,\qquad \frac{S(u)-S(0)}{\sigma^2(u)S^\prime(u)}\sim\frac{1}{c} \qquad \textrm{as} \quad u\to 0.
\]
Before we continue with our exposition, we study an interesting example, which is related to the model of efficient individuals studied in  \cite{GCP} and where the invariant distribution of $Z$ can be computed in a closed form. Assume that there are no catastrophes and let $d=0$, $\pi_1=\rho>0$, $b_1=b>0$,  $c>b$ and $\pi_i=0=b_i$, for $i\ge 2$. In this particular case, the unique strong solution $X$ of the  SDE \eqref{eq:dualnaintro} is a diffusion whose parameters are given by
\[
\mu(x):=\rho(x^2-x)
\qquad\textrm{ and }\qquad \sigma^2(x):=2x(1-x)(c-bx).
\]
If $2\rho\ne b$, the scale function associated to the diffusion $X$ satisfies
\[
S(x)=\int_0^x \exp\left\{\frac{\rho}{b}\int_0^y\frac{\ud v}{\frac{c}{b}-v}\right\}\ud y= K\left(1-\left(1-\frac{b}{c}x\right)^{1-\frac{\rho}{b}}\right),\quad  \textrm{for } \quad x\in [0,1],
\]
 where  $K$ is a constant that depends on $(c, b, \rho)$ which is  positive or negative accordingly as $\rho<b$ or $b<\rho$. If $\rho= b$, then 
 \[
 S(x)= K\ln \left(\frac{1}{1-\frac{b}{c}x}\right),\quad  \textrm{for } \quad x\in [0,1],
 \] 
 with $K$ positive.  Therefore,   if $b<\rho$, we have 
 \[
 \sum_{n\ge 0} x^n\nu(n)=\mathbb{P}_x(X_{T_{0,1}}=1)=\frac{S(x)-S(0)}{S(1)-S(0)}=C_{c, b, \rho}\left(\left(1-\frac{b}{c}x\right)^{1-\frac{\rho}{b}}-1\right),
 \]
 where
 \[
 C_{c, b, \rho}=\left(\left(1-\frac{b}{c}\right)^{1-\frac{\rho}{b}}-1\right)^{-1}.
 \]
By the binomial theorem, we deduce 
\[
\nu(k)=C_{c, b, \rho}\frac{\Gamma\Big(\frac{\rho}{b}-1+k\Big)}{\Gamma\Big(\frac{\rho}{b}-1\Big)k!}\left(\frac{b}{c}\right)^k, \qquad  k\ge 1,
\]
where $\Gamma$ denotes the so-called Gamma function. The invariant distribution for the case $b>\rho$ is the same but instead of using the binomial theorem we need to use the generating function of a Beta-Geometric distribution with parameters $(1-\frac{\rho}{b}, \frac{\rho}{b})$.

In the case $\rho=b$, the representation of the function $f(z)=-\ln (1-z)$, for $z\in [0,1)$,  as an hypergeometric function  leads to 
\[
\nu(k)=\widetilde{C}_{c, b}\left(\frac{b}{c}\right)^k\frac{1}{k}, \qquad  k\ge 1,
\]
with 
\[
\widetilde{C}_{c, b}=\left(\ln \left(\frac{1}{1-\frac{b}{c}}\right)\right)^{-1}.
\]

The moment duality is also useful to show that a subcritical cooperative branching process with  interactions and no annihilation comes down from infinity.   Formally,  we define the law $\mathbf{P}_\infty$ starting from infinity with values in $\overline{\mathbb{N}}$ as the limits of the laws $\mathbf{P}_n$ of the process issued from $n$. When the limiting process is non-degenerate, it hits finite values in finite time with positive probability. Moreover, we say that the process $Z$ {\it comes down from infinity} if $\mathbf{P}_\infty(Z_t<\infty)=1$ for all $t>0$.\begin{theorem}\label{infinity}
Assume  $|\mathbf{m}|<\infty$ and  $a=0$. Then a subcritical cooperative (i.e. $\varsigma<0$)  branching process  with  interactions  $Z$ 
comes down from infinity. Furthermore,  we have that $$\mathbf{E}_\infty[\tau_1]<\infty,$$
where  $\tau_{1}=\inf\{t>0:Z_t=1\}$.
\end{theorem}
Another interesting consequence of  the moment duality and Proposition \ref{FLyap}, is the following result that describes the asymptotic behaviour of  jump-diffusions of the form (\ref{eq:dualnaintro}). In particular, we can determine the probability of fixation at 1 of the jump-diffusion $X$.

\begin{corollary}\label{thm:criteriaX}
Let $X$ be the unique strong solution of \eqref{eq:dualnaintro} which is the unique moment dual of a subcritical cooperative branching process with interactions $Z$ and no annihilation  (i.e. $\varsigma<0$ and $a=0$) and  $X_\infty:=\lim_{t\to\infty}X_t$. Then, for all $x\in[0,1]$, 
\begin{enumerate}
\item[i)] if $d=0$ and $\rho=0$,  $$\mathbb{P}_x(X_\infty=1)=x=1-\mathbb{P}_x(X_\infty=0),$$ 
\item[ii)] if $d=0$ and $\rho>0$,  $$\mathbb{P}_x(X_\infty=1)=\sum_{i\in \mathbb{N}} x^i \nu(i) ,$$ where $\nu$ is the unique stationary distribution of $Z$,
\item[iii)] if $d>0$,  $$\mathbb{P}_x(X_\infty=1)=1.$$ 
\end{enumerate}
\end{corollary}

 Finally, we study the case  $a>0$. This case has been studied before in different contexts, see for instance \cite{AS12,BEM07,BK11,B85} and the references therein. It is important to note that in this case monotonicity is lost, in the sense that it is not true that a bigger population has more probability of survival. For instance, if there are two individuals (or particles) the probability of extinction is higher than if there is only one individual (or one particle). Monotonicity is a very important and useful property, thus  the case $a>0$ seems to be technically more involved  and many properties of processes with annihilation events remain unknown.

We point out that   in this case there is also  a moment duality relationship between a branching process with interactions and  a jump-diffusion similar  to (\ref{eq:dualnaintro}). In this case,  the existence of a unique strong solution is more complicated than the case  $a=0$. We will not prove this claim, since in this case  there is no relevant interpretation for the moment dual of $Z$ and   we can study its long term behaviour   via a coupling argument. Moreover determining the invariant distribution of $Z$ (whenever it exist) in terms of the probability of fixation at, for example, $-1$ and $1$ seems more involved.

Finally, our last result  provide a  detailed description of the long term behaviour of $Z$ in the case when $a>0$. As it was claimed before, the behaviour of $Z$ is more involved than the case with no annihilation.

\begin{proposition}\label{lem:criteriaannihilation}
Let $Z$ be a subcritical cooperative branching process with  interactions (i.e. $\varsigma<0$) with $a>0$ and  let
\[
\tau_{0,1}=\inf\{t>0:Z_t\in\{0,1\}\} \qquad \textrm{and} \qquad \tau_{i}=\inf\{t>0:Z_t=i\},
\]
for $i\in\{0,1\}$.
\begin{enumerate}
\item[i)] If $d=0$, and 
\begin{enumerate}
\item[a)]   either $\Lambda(0,1]>0$,  $c>0$ or $\sum_{i\in \mathbb{N}}\pi_{2i-1}+\sum_{i\in \mathbb{N}} b_{2i-1}>0$,
\begin{enumerate}
\item[a.1)] with $\rho=0$,  then $Z$ gets absorbed in $\{0,1\}$ and $\sup_{n\in \mathbb{N}}\mathbf{E}_n[\tau_{0,1}]<\infty$,  
\item[a.2)] with $\rho>0$, then  $Z$ gets absorbed in  $\{0\}$ and   $\sup_{n\in \mathbb{N}}\mathbf{E}_n[\tau_0]<\infty$.
\end{enumerate}
 \item[b)]  $\Lambda(0,1]=0$, $c=0$ and $\sum_{i\in \mathbb{N}} \pi_{2i-1}+\sum_{i\in \mathbb{N}} b_{2i-1}=0$, 
\begin{enumerate}
\item[b.1)] with $Z_0=2n$ for some $n\in \mathbb{N}$, then  $Z$ takes values in the even numbers, gets absorbed in the state $\{0\}$ and $\sup_{n\in \mathbb{N}}\mathbf{E}_n[\tau_{0}]<\infty$,  
\item[b.2)] with $Z_0=2n+1$ for some $n\in \mathbb{N}_0$,  then  $Z$ takes values in the odd numbers. Further,
\begin{itemize}
\item[b.2.1)] if $\rho=0$, then the process $Z$ gets absorbed in the states $\{1\}$ and $\sup_{\in \mathbb{N}}\mathbf{E}_n[\tau_{1}]<\infty,$
\item[b.2.2)] if $\rho>0$, then the process $Z$ has unique stationary distribution $\nu$ satisfying
\[
\lim_{t\rightarrow \infty} \sup_{n\in \mathbb{N}}||\mathbf{P}_n(Z_t=\cdot)-\nu(\cdot)||=0.
\]
 
\end{itemize}
\end{enumerate}
\end{enumerate}
\item[ii)] If $d>0$, 
then $Z$ gets absorbed in the state $\{0\}$ and $\sup_{n\in \mathbb{N}}\mathbf{E}_n[\tau_{0}]<\infty$.
\end{enumerate}
\end{proposition}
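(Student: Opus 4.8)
The strategy is to combine two ingredients: (a) the positive recurrence of $Z$ established in Proposition~\ref{FLyap} (which holds regardless of the value of $a$, since its proof is via a Foster--Lyapunov argument that does not use monotonicity), and (b) a careful analysis of the communicating classes of the chain, which in the annihilation case ($a>0$) depend delicately on the parity-preserving or parity-breaking character of the various transitions. The key observation is that the down-steps of $Z$ of size $2$ (the events of rate $ai(i-1)$ and $\binom{i}{3}\lambda_{i,3}$, plus in general $\lambda_{i,k}$ for even $k$) leave the parity of the state unchanged, whereas down-steps of odd size $k-1$ and up-steps $\pi_j$, $b_j$ with $j$ odd, and the competition/death steps of size $1$, change the parity. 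Therefore the set $\{n : n \text{ even}\}$ is absorbing for $Z$ (apart from eventual absorption at $\{0\}$ or $\{1\}$) precisely when \emph{no} parity-changing transition is available, i.e.\ when $d=0$, $c=0$, $\Lambda(0,1]=0$ and $\sum_i(\pi_{2i-1}+b_{2i-1})=0$; this is exactly the dichotomy between case (i)(a) and case (i)(b).

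I would organise the argument as follows. First, for case (ii), $d>0$: here $0$ is always accessible (from state $1$ at rate $d$, and from state $2$ at rate $2d$ toward $1$), and $0$ is absorbing since $q_{0,j}=0$ for all $j$; positive recurrence of $Z$ on $\overline{\mathbb N}$ together with accessibility of the absorbing state $\{0\}$ forces absorption at $\{0\}$ with probability one, and the uniform bound $\sup_{n\ge 1}\mathbf E_n[\tau_0]<\infty$ follows from the same Lyapunov function $V(n)$ used in Proposition~\ref{FLyap}: the drift inequality $QV(n)\le -\varepsilon V(n)+C\mathbf 1_{\{n\le n_0\}}$ plus the fact that from the finite set $\{2,\dots,n_0\}$ one reaches $0$ in bounded expected time (finitely many states, irreducible downward) yields a uniform bound via the standard optional-stopping/Dynkin argument. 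Second, for case (i)(a) with $d=0$: now $0$ is reachable only through the down-steps that decrease the population; if $\rho=0$ there is no way to leave $\{0,1\}$ once entered and no way to increase beyond the current maximum except through cooperation $b_i$ which requires at least two individuals, so both $\{0\}$ and $\{1\}$ are absorbing and the chain is absorbed in $\{0,1\}$; the hypothesis that at least one parity-changing transition exists (either a catastrophe, or competition $c>0$, or an odd branching/cooperation rate) guarantees that from an even state one can reach an odd state and hence reach $1$, and from an odd state one can reach an even state and hence reach $0$ — so when $\rho>0$ the state $1$ still has a positive branching rate $\rho$ out of it and is \emph{not} absorbing, while $0$ remains absorbing and, being accessible from $1$ (via a catastrophe or death or competition reducing $1$... wait — with $d=0$, how does one leave state $1$ downward?): one leaves state $1$ only by branching (rate $\rho$) up to $1+i$, and from there a parity argument plus the available parity-changing transition drives the chain to $0$. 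Absorption at $\{0\}$ with $\sup_n\mathbf E_n[\tau_0]<\infty$ then again follows from the Lyapunov bound of Proposition~\ref{FLyap} restricted to this communicating structure.

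Third, for case (i)(b), $d=c=0$, $\Lambda(0,1]=0$ and only even branching and cooperation jumps: here $Z$ genuinely preserves parity. If $Z_0=2n$, the chain stays in the even integers; the only down-steps available are the annihilation steps of size $2$ (rate $ai(i-1)$), which from state $2$ lead to $0$, so $0$ is accessible and absorbing, giving (b.1); positive recurrence plus accessibility of $\{0\}$ within the even sublattice gives absorption there with the uniform hitting-time bound. If $Z_0=2n+1$, the chain stays in the odd integers, and state $1$ is the bottom: when $\rho=0$, state $1$ is absorbing (no branching out, no cooperation since that needs $\ge 2$, no down-step from $1$), so the chain is absorbed at $\{1\}$ with $\sup_n\mathbf E_n[\tau_1]<\infty$; when $\rho>0$, the chain restricted to the odd integers is irreducible (from $1$ one branches to $1+i$ with $i$ even, i.e.\ stays odd and can go arbitrarily high, and one comes back down via annihilation) and positive recurrent, hence has a unique stationary distribution $\mu$, and the total-variation convergence $\sup_{n\ge 0}\|\mathbf P_n(Z_t=\cdot)-\mu(\cdot)\|\to 0$ follows from uniform ergodicity, which in turn is a consequence of the Foster--Lyapunov drift condition (geometric drift toward a finite set, plus a minorization on that finite set) exactly as in the proof of Theorem~\ref{thm:criteria}.

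The main obstacle is the bookkeeping in case (i)(a): verifying rigorously that \emph{each} of the three alternative hypotheses ($\Lambda(0,1]>0$, or $c>0$, or some odd $\pi_{2i-1}$ or $b_{2i-1}$ positive) suffices to break parity in a way that makes $\{0\}$ (when $\rho>0$) or $\{0,1\}$ (when $\rho=0$) the unique absorbing class accessible from every state, and then upgrading "absorbed almost surely" to "absorbed in uniformly bounded expected time." The first part is a finite case check on the generator $Q$; the second part I expect to handle uniformly by exhibiting a single Lyapunov function $V$ with $QV\le -1$ outside a finite set $F$ and $\sup_{n\in F}\mathbf E_n[\tau_{0}\text{ or }\tau_{0,1}]<\infty$ (finite irreducible piece), then Dynkin's formula gives $\mathbf E_n[\tau] \le V(n)+\sup_{F}\mathbf E[\cdot]$; the candidate $V$ is the same one (essentially $V(n)=n$ or $V(n)=\log n$) that already works in Proposition~\ref{FLyap}, since the annihilation term only \emph{helps} the downward drift.
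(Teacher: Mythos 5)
Your parity/accessibility analysis of the communicating classes (which transitions preserve parity, when $\{0\}$ and $\{1\}$ are absorbing, the reduction of $\tau_{0,1}$ to $\tau_0$ or $\tau_1$ on the even/odd sublattices) matches the paper's argument essentially step by step. The genuine gap is in how you propose to obtain the \emph{uniform} bounds $\sup_{n\ge 1}\mathbf{E}_n[\tau_{0}]<\infty$, $\sup_{n\ge 1}\mathbf{E}_n[\tau_{0,1}]<\infty$ and the uniform-in-$n$ total variation convergence in case (b.2.2). You invoke the Foster--Lyapunov drift of Proposition \ref{FLyap} with $V(n)=n$ (or $\log n$) and write $\mathbf{E}_n[\tau]\le V(n)+\sup_F\mathbf{E}[\cdot]$; but with an unbounded $V$ this bound grows with $n$ and does \emph{not} yield a supremum over $n$. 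The same objection applies to ``geometric drift plus minorization'': with unbounded $V$ one gets $V$-uniform ergodicity, $\|\mathbf{P}_n(Z_t=\cdot)-\mu\|\le CV(n)\rho^t$, not the uniform convergence claimed. The uniformity is precisely the content of the statement (it encodes coming down from infinity), and it cannot be extracted from positive recurrence alone.

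The paper closes this gap by a completely different device: it couples $Z$ (with $a>0$) in an order-preserving way with a branching process with interactions $\overline{Z}$ having $\overline{a}=0$ and $\overline{c}=c+a$ (each annihilation event of $Z$ is matched with a competition event of $\overline{Z}$), so that $Z_t\le\overline{Z}_t$ and hence $\tau_{0,1}\le\overline{\tau}_{0,1}$, $\tau_0\le\overline{\tau}_0$. The uniform bounds for $\overline{Z}$ were already established in Lemmas \ref{lem:criteria} and \ref{lem:criteria1} and Theorem \ref{thm:criteria}, where the uniformity in $n$ comes from the moment duality (the dual diffusion started at $x$ fixates in finite expected time independently of $n$, which gives $\inf_{n}\mathbf{P}_n(Z_{t_1}=1)>\epsilon$ and a geometric domination of the hitting time). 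If you want to avoid the coupling and duality altogether, your route could in principle be repaired by constructing a \emph{bounded} Lyapunov function with $QV\le-\varepsilon$ outside a finite set (exploiting the quadratic downward drift $\sim\varsigma n^2$, e.g.\ a function whose increments decay like $1/(n(n-1))$), but that is an additional construction you have not supplied, and it is not the function used in Proposition \ref{FLyap}.
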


 \begin{figure}[h]
\label{fig}
    \centering
     \includegraphics[height=.5\textwidth]{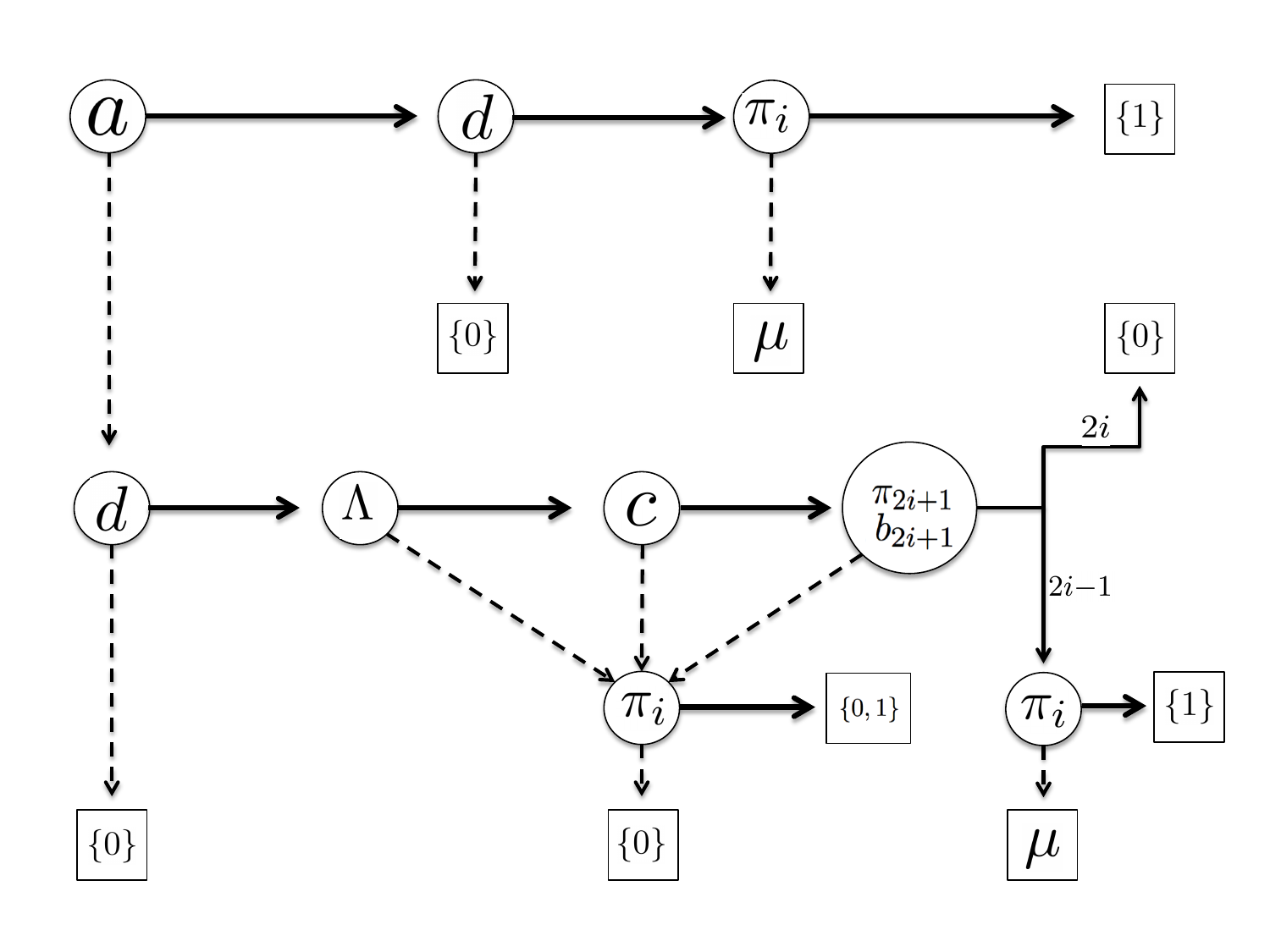}  
      \caption{\small This figure exemplifies the long term behaviour of  branching processes with interactions in the subcritical cooperative regime depending on its parameters. Doted lines mean that the parameter is different to zero while continuous lines mean that the parameter is zero. For example if $a=0$, $d=0$ and $\pi_i=0$ for all $i\in \mathbb{N}$, we take the right edge three times. In this case the process gets absorbed at $\{1\}$. The symbol $\mu$ means that the process has a non degenerated stationary distribution.}
\end{figure}

\section{Proofs}

\subsection{Proof of Theorem  \ref{Propfinitness}}
\begin{proof} 

For the first part of the statement, we apply Theorem 1.11 in  Chen \cite{Chen} (see also Theorem 2.2 in Tweedie \cite{Twee} for the case $\mathbf{m}\le 0$ and Theorem 2.1 in Meyn and Tweedie \cite{stability0} for the case $\mathbf{m}>0$). The proof of the aforementioned result follows from  a localisation argument and its conditions  can be written  in terms of the localized transition rates   unchanging the conclusion. More precisely, we consider the  particular case where the state space is $\mathbb{N}_0$ and introduce the sequence of sets $E_n=\{0, 1,2, \ldots, n\} $, for $n\in \mathbb{N}$,  which increases towards $\mathbb{N}_0$.  We also introduce the localized transition rates $(q^{(n)}_{i,j})_{i,j\in  \mathbb{N}_0}$ as follows
\[
q^{(n)}_{i,j}=\mathbf{1}_{E_n}(i)q_{i,j} \qquad\textrm{with}\qquad-q^{(n)}_{i,i}=\sum_{j\neq i}q^{(n)}_{i,j}\quad \textrm{for } i,j\in   \mathbb{N}_0.
\]
If a non-negative function $\varphi=(\varphi(i), i\in \mathbb{N}_0)$ satisfies
\[
\lim_{n\to\infty}\inf_{i\notin E_n}\varphi(i)=\infty,
\qquad
\textrm{and} 
\qquad
\sum_{j\in \mathbb{N}_0}q^{(n)}_{i,j}\Big(\varphi(j)-\varphi(i)\Big)\le c \varphi(i) \qquad \textrm{for}\,\, i\in E_n
\]
where  $c\in \mathbb{R}$, then the associated process to $(q_{i,j})_{i,j\in  \mathbb{N}_0}$ does not explode since the same arguments in   parts (ii), (iii) and (iv) of the proof  of  Theorem 1.11 in  Chen \cite{Chen}   still hold with the above assumptions.  In our particular case, we take  
\[
\varphi(i)=\left\{\begin{array}{ll}
i & \textrm{if } i\in E_n,\\
n & \textrm{otherwise.}
\end{array} \right .
\]
Since  $\inf_{i\notin E_n}\varphi(i)=n$ and $\varsigma \le 0$, we deduce 
\[
\sum_{j\in \mathbb{N}_0}q^{(n)}_{i,j}\Big(\varphi(j)-\varphi(i)\Big)\le i\mathbf{m} + i(i-1) \varsigma -\sum_{k=2}^ i (k-1)\binom{i}{k} \lambda_{i,k} \le \varphi(i) \mathbf{m}, \qquad \textrm{for}\,\, i\in E_n.
\]
 which implies from above that the process $Z$ does not explode (or it is unique or regular in the terminology of \cite{Chen}).

For the second statement, we assume that the measure $\Lambda$ is identically zero and that $\varsigma>0$. We first treat the case when  $d=\rho=0$. 
In order to do so, we  introduce  $A=(A_t, t\in \mathbb{R}_{+,0})$  a compound Poisson process with parameter  $a+b+c$ and jump distribution $\eta$ satisfying
\[
\eta(i)=\left\{ \begin{array}{ll}
\displaystyle\frac{a}{a+b+c} & \textrm{ if  } i=-2,\\[.25cm]
\displaystyle\frac{c}{a+b+c} & \textrm{ if  } i=-1,\\[.25cm]
\displaystyle\frac{b_i}{a+b+c} & \textrm{ if  } i\in \mathbb{N}.\\
\end{array}
\right .
\]
We  also consider the function $g(i)=i(i-1)$ which is  non negative for $i\in \mathbb{N}_{0}$ with  $g(i)=0$ for $i=0,1$. Hence, from Theorems 6.1.1 and 6.1.3  in Ethier and Kurtz \cite{EK} the random time changed process
\[
Z_{t}=A_{\int_0^t g(Z_s) \ud s},\qquad t\le \theta_{0,1} :=\inf\{s: A_s=0 \textrm{ or } 1\},
\]
defines a Markov process with  absorbing   states $\{0,1\}$ and whose infinitesimal generator is given by 
\[
Qf(n)=n(n-1)\mathcal{B}f(n), 
\]
where, for $f\in C_0(\mathbb{Z}\cup\{\infty\},  \mathbb{R})$ with $f(i)=0$ for $i\in \{\ldots, -2,-1, 0,1\}$, the infinitesimal generator $\mathcal{B}$ acts as follows
\[
\mathcal{B}f(n)=a[f(n-2)-f(n)] +   c[f(n-1)-f(n)]+\sum_{i\in \mathbb{N}} b_i[f(n+i)-f(n)], \quad n\in \mathbb{N}\setminus\{1\}.
\]
In other words, the process $Z$ has the same law as a branching process with interactions with parameters $a, c$ and  $b_i$, for $i\in \mathbb{N}$.

In the same probability space, we also consider a Poisson process $\widetilde{A}$ with parameter $\delta\in(0, \varsigma)$, then by using a similar random time change as above we introduce a branching process with interactions with parameters $c=a=0$, $b_1=\delta>0$ and $b_i=0$, for $i\in\mathbb{N}\setminus\{1\}$. We observe that  the process $\widetilde{Z}$, which is the process with this parameters in this probability space,  explodes  almost surely. Indeed, let us define 
 $\widetilde{\tau}^+_i=\inf\{t>0:\widetilde{Z}_t\geq i\}$ and observe that its life-time, here denoted by $\widetilde{\tau}_\infty$, satisfies $\widetilde{\tau}_\infty=\sup_{i\in \mathbb{N}}\widetilde{\tau}_i^+ $. Thus, we see
\begin{equation}
\widetilde{\mathbf{E}}_n[\widetilde{\tau}_\infty]\leq\widetilde{\mathbf{E}}_2[\widetilde{\tau}_\infty]=\sum_{i\in\mathbb{N}} \widetilde{\mathbf{E}}_i[\widetilde{\tau}^+_{i+1}-\widetilde{\tau}^+_{i}]=\frac{1}{\delta}\sum_{i\in\mathbb{N}\setminus\{1\}} \frac{1}{ i(i-1)}<\infty,
\end{equation}
where $\widetilde{\mathbf{E}}_n$ denotes the expectation of $\widetilde{Z}$ starting from $n\in\mathbb{N}\setminus\{1\}$.  The previous identity implies  that $\widetilde{\tau}_{\infty}$ is finite a.s or equivalently that $\widetilde{Z}$ explodes a.s.

  Next, we assume that $A_0=n>m=\widetilde{A}_0$ and define the event $E=\{A_t>\widetilde{A}_t, \text{ for all }t>0\}$. It is straightforward to deduce that the process $S_t=A_t-\widetilde{A}_t$, for $t\ge 0$,  is a compound Poisson process starting from $n-m>0$ and drifting to $\infty$, since $\varsigma>\delta$. Hence with positive probability the process $S$ stays strictly positive, implying that $\mathbb{P}(E|S_0=n-m)>0$. In other words, conditionally on $E$ and given that $S_0=n-m>0$, we have that the process $\widetilde{Z}$ is stochastically dominated by $Z$ and since the former explodes then $Z$ also explodes. This proves our statement for the case $d=\rho=0$.

For the case $d+\rho>0$, our analysis  will be based in the following coupling argument. Let us consider two branching processes with  interactions, $Z=(Z_t, t\in \mathbb{R}_{+,0})$ and $\overline{Z}=(\overline{Z}_t, t\in \mathbb{R}_{+, 0})$, with respective parameters $d, a, c,b_i$ and $\pi_i$, for $i\in \mathbb{N}$,  such that  $d+\rho>0$; and $\overline{d}=\sum_{i\in \mathbb{N}}\overline{\pi_i}=0$,
 $\overline{c}=c+\epsilon d,$ $\overline{a}=a$ and  $\overline{b_i}=b_i$, for all $i\in \mathbb{N}$, where $\epsilon$ is chosen positive and such that
\begin{equation}\label{eqepsilon}
-2\overline{a}-\overline{c}+\sum_{i\in \mathbb{N}} i \overline{b_i} =-2a-c-\epsilon d+\sum_{i\in \mathbb{N}} i b_i> 0.
\end{equation}
We denote by $\overline{\mathbf{P}}_n$  the  law of $\overline{Z}$ starting from $\overline{Z}_0=n$. We also observe, from above,  that $\overline{Z}$ explodes with positive probability. Moreover, let $n_0=\inf\{n\in \mathbb{N}: 1< (n-1)\epsilon\}$ and note  that for any starting condition $Z_0=\overline{Z}_0=n> n_0$, we necessarily have 
 \[
 nd+n(n-1)c<n(n-1)\overline{c}=\epsilon n(n-1)d+n(n-1)c.
 \] 
Next,  we  consider the following coupling between the Markov chains  $Z$ and $\overline{Z}$. Let us introduce  the process $\{(U_t,\overline{U}_t), t\in \mathbb{R}_{+,0}\}$  with the following dynamics: Assume that $U_0=\overline{U}_0>n_0$, for all $t<\gamma_{n_0}:=\inf\{t>0: \overline{U}_t\le n_0\}$, the couple $(U_t,\overline{U}_t)$ has the following transitions rates,  for $m\ge n$, 
\begin{equation}\label{coupling}
(U_t,\overline{U}_t)\text{ goes from }(n,m)\text{ to}\left\{ \begin{array}{ll}
(n+i,m+i), & \textrm{ with rate } m(m-1)b_i,\\
(n+i,m), & \textrm{ with rate } n\pi_i+[n(n-1)-m(m-1)]b_i,\\
(n-1,m-1), & \textrm{ with rate } \underline{\Theta}(n,m),\\
(n,m-1), & \textrm{ with rate }  m(m-1)(\epsilon d+c)-\underline{\Theta}(n,m),\\
(n-1,m), & \textrm{ with rate }  n(n-1)c+nd-\underline{\Theta}(n,m),\\
(n-2,m-2)&\textrm{ with rate } n(n-1)a.
\end{array}
\right .
\end{equation}
where $\underline{\Theta}(n,m)=\min( n(n-1)c+nd,m(m-1)(\epsilon d+c))$.
For  $t\geq \gamma_{n_0}$, the Markov chains $U_t$ and $\overline{U}_t$ evolve independently with the same transition rates as $Z$ and $\overline{Z}$, respectively. We denote by $\mathbf{P}_{(n,m)}$ the law of $\{(U_t,\overline{U}_t), t\in \mathbb{R}_{+,0}\}$ starting from $(n,m)$. Note  from  our construction, that the Markov chains $U$ and $\overline{U}$ are equal in distribution to $Z$ and  $\overline{Z}$, respectively. Furthermore, we necessarily  have
$$
\mathbf{P}_{(n,n)}\Big(U_t\geq \overline{U_t}, \textrm{ for } t<\gamma_{n_0}\Big)=1.$$
We deduce for $n>n_0$, $\mathbf{P}_{(n, n)}(U_t=\infty)>\mathbf{P}_{(n, n)}(\overline{U}_t=\infty,\gamma_{n_0}>t)$ where  the right hand side of the inequality is positive for $t$ sufficiently large. The latter implies that $Z$ is not conservative.

\end{proof}

For the proof of Proposition \ref{FLyap}, we verify the Foster-Lyapunov conditions for positive recurrence. In order to do so, we consider the jump Markov chain  associate to $Z$ that we denote by $Y=(Y_n, n\in \mathbb{N}_0)$ and whose jump matrix is given by $P=(p_{i,j})_{i,j\in \mathbb{N}_0}$ where
\[
p_{i,j}=\left\{ \begin{array}{ll}
    -q_{i,j}/q_{i,i} & \textrm{if $j\ne i$}\\
0 & \textrm{if $j=i$,}
\end{array} \right.
\]
for $i\in \mathbb{N}$ and $p_{0,0}=1$.  On the other hand, we recall that if a state is recurrent or transient for the jump chain $Y$ then the state is also recurrent or transient for the chain $Z$. See for instance the monograph of Norris \cite{No} for further details of Markov chains and its associated jump chain.

\begin{proof}[Proof of Proposition \ref{FLyap}] We first prove part (ii). Recall that $d=0$ and $\rho>0$ and  that under our assumption $c>0$. From the form of the jump matrix $P$ defined above, we observe that the jump chain $Y$ is irreducible in $\mathbb{N}$ and that the state $\{0\}$ is not accesible from any state  $i\in\mathbb{N}$.

In order to analyse the class properties of the state space of the jump chain $Y$, and implicitly for $Z$,  we  apply the Foster-Lyapunov criteria (see for instance Proposition 1.3 in Hairer \cite{Ha} or Theorem 4.2 in Meyn and Tweedie \cite{stability0}). Let us consider the discrete generator associated to $Y$ which is defined as follows
\[
\mathcal{L}f(i)= Pf(i)-f(i)=-\frac{1}{q_{i,i}}\sum_{j\in \mathbb{N}_0}q_{i,j}f(j),
\]
for $f:\mathbb{N}_0\to \mathbb{R}$. We choose the Lyapunov function to be  $f(i)=C i$ for $i\in \mathbb{N}_0$ and $C$ a positive constant that we will specify later. According to the  Foster-Lyapunov criteria we know that  if $\mathcal{L}f(i)\leq -1$ for all but finitely many values of $i$, then $Y$ is positive recurrent.  Hence straightforward computations leads to 
\begin{equation}\label{id}
\begin{split}
\mathcal{L}f(i)&=-\frac{C}{q_{i,i}}\left(i\mathbf{m} + i(i-1) \varsigma-\sum_{k=2}^ i (k-1)\binom{i}{k} \lambda_{i,k} \right)\\
&\le C\left( \frac{m}{(i-1)(b+c+a)}+\frac{\varsigma}{(b+c+a)}-\frac{\sum_{k=2}^ i (k-1)\binom{i}{k} \lambda_{i,k}}{i(i-1)(b+c+a)}\right).
\end{split}
\end{equation}
For $i$ sufficiently large,  we can clearly choose $C$ in such a way that $\mathcal{L}f(i)\le -1$, implying that  the Foster-Lyapunov criteria holds. In other words,  $Z$ is positive recurrent in $\mathbb{N}$. 

Now we prove part (i). Recall that $\rho=0$ and $d=0$  and  that under our assumption $c>0$. Again from the form of the jump matrix $P$, we observe that the state  $\{1\}$ is absorbing and that $\{0\}$ is not accessible.  Moreover, if $b>0$  the class $\{2, 3, \ldots\}$ is irreducible for $Z$, and  if $b=0$, the process $Z$ is decreasing. In any case, we can always stochastically dominate $Z$ with another branching process with interactions, here denoted by $\widetilde{Z}$, with the same parameters except  $\pi_1>0$.  We refer to Section 2 of L\'opez et al. \cite{LMS} for the conditions on the intensities $(q_{i,j})_{i,j\ge 0}$ in order to have  existence of an order preserving coupling. Since $\widetilde{Z}$ is recurrent and reach the state $\{1\}$ in finite time a.s., then $Z$ is absorbed at $1$ in finite time a.s.

Finally, we deduce part (iii). Since in this case $d>0$, from the form of the jump matrix $P$ we observe that the state  $\{0\}$ is accesible.  If $\rho=0$ and $b=0$, the process $Z$ is decreasing and  if $\rho>0$ or $b>0$, then $\mathbb{N}$ is irreducible for $\bar Z$ constucted in part (ii) and again an order preserving coupling argument guarantees that $Z$ is absorbed at $\{0\}$ in finite time a.s.

\end{proof}

\subsection{Moment duality property and proof of Theorem \ref{lemma:characterization}.}
The aim of this section is to prove Theorem  \ref{lemma:characterization} which characterizes a class of jump-diffusions  which fulfills the moment duality property with respect to (sub)critical branching processes with interactions and without annihilation, i.e. $a=0$. This moment duality will be very helpful for our purposes.  The proof of Theorem  \ref{lemma:characterization} follows Lemmas \ref{lemma3} and \ref{lemma1}  below.

In order to prove Theorem \ref{lemma:characterization} we first introduce the class of jump-diffusions via the following SDE
\begin{equation}\label{eq:dualna}
\ud X_t=\mu(X_{t})\ud t+\sigma(X_t)\ud B_t +\int_{(0,1]}\int_{(0,1]}zg(X_{t-}, u) \widetilde{N}(\ud t,\ud z,\ud u),
\end{equation}
where $\widetilde{N}$ is a compensated  Poisson random measure on  $(0,\infty)\times (0,1]^2$ with intensity $\ud t z^{-2}\Lambda(\ud z) \ud u$, the function $g(x,u)$ is defined by
\[
g(x,u)=\mathbf{1}_{\{u\le 1\land x \}}-(1\land x)\mathbf{1}_{\{x\ge 0\}},
\]
the functions $\mu:\R\rightarrow \R$ and $\sigma: \R\rightarrow \R$ are continuous and  satisfy
\[
\mu(x)=d\mathbf{1}_{\{x< 0\}}+\left(d(1-x)+\sum_{i\in \mathbb{N}}\pi_{i}(x^{i+1}-x)\right)\mathbf{1}_{\{x\in[0,1]\}},
\]
and
\[
\sigma^2(x)=\left(c(x-x^2)+\sum_{i\in \mathbb{N}} b_i(x^{i+2}-x^2)\right)\mathbf{1}_{\{x\in[0,1]\}},
\]
for some positive constants $d,c,b_i, \pi_i$, for $i\in \mathbb{N}$.

 \begin{lemma}\label{lemma3}
 Fix $d,c, b_i,\pi_i$, for $i\in \mathbb{N}$,  where all terms  are no negative. If such coefficients satisfy $|\mathbf{m}|<\infty$ and $\varsigma\le 0$,
then the SDE (\ref{eq:dualna})  with starting point  $X_0\in[0,1]$ has a unique strong solution taking values on $[0,1]$.  \end{lemma}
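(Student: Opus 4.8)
The plan is to establish existence and uniqueness of a strong solution in two stages: first a pathwise-uniqueness argument, then a separate argument showing that the solution cannot leave $[0,1]$. For pathwise uniqueness I would split the SDE \eqref{eq:dualna} into its continuous part and its jump part. The drift $\mu$ is continuous and (after the truncations built into its definition) globally Lipschitz on $\R$, so the drift term poses no difficulty. The diffusion coefficient $\sigma$ is only Hölder-$\tfrac12$ near the boundary points $0$ and $1$ (because of the factors $x-x^2$ and $x^{i+2}-x^2$), so for the continuous martingale part I would invoke the Yamada--Watanabe criterion: $\sigma^2$ is a finite sum of polynomials each vanishing to first order at the relevant boundary points, hence $|\sigma(x)-\sigma(y)|\le \kappa\sqrt{|x-y|}$ for some constant $\kappa$, which is exactly the $\rho(u)=\sqrt u$ condition with $\int_{0+}\rho(u)^{-2}\ud u=\infty$. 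For the jump term, the integrand $z\,g(x,u)$ is bounded by $z$ and $\int_{(0,1]}z^2\,z^{-2}\Lambda(\ud z)=\Lambda((0,1])<\infty$, so the compensated jump integral is an $L^2$ martingale; moreover $u\mapsto g(x,u)$ differs from $u\mapsto g(y,u)$ only on an interval of length $|x-y|$ (for $x,y\in[0,1]$), which gives an $L^1$-in-$u$ Lipschitz bound $\int_{(0,1]}|g(x,u)-g(y,u)|\ud u \le |x-y|$, enough to control the jump contribution to $\E|X_t-X'_t|$ by a Gronwall argument. Combining the Yamada--Watanabe estimate for the continuous part with this Lipschitz-type estimate for the jumps yields pathwise uniqueness; together with weak existence (obtained from a standard tightness/martingale-problem argument for the generator, using that all coefficients are bounded continuous on the compact $[-\epsilon,1+\epsilon]$ and that the jump kernel is finite) the Yamada--Watanabe theorem then gives a unique strong solution.

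The second stage is to show the solution started in $[0,1]$ stays in $[0,1]$, which is really the crux and where the structure of the coefficients is used. I would argue at the two boundaries separately. At $x=1$: note $\mu(1)=d(1-1)+\sum_i\pi_i(1-1)=0$, $\sigma^2(1)=0$, and the jump displacement $z\,g(1,u)=z(\mathbf 1_{\{u\le 1\}}-1)=0$, so $1$ is an absorbing-type boundary from which the drift does not push outward; a comparison argument (or Itô's formula applied to $(X_t-1)^+$, using the local Lipschitz bound away from other singularities) shows $X$ cannot exceed $1$. At $x=0$: $\sigma^2(0)=0$ and $g(0,u)=0$, while $\mu(0)=d\ge 0$ points inward, so again a comparison with the constant $0$ gives $X_t\ge 0$. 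The cleanest way to make this rigorous uniformly is to localise: on the event that $X$ has not yet left $[0,1]$ all the indicator truncations in $\mu,\sigma,g$ are inactive, so on that event $X$ solves the "genuine'' SDE \eqref{eq:dualnaintro}; then apply the boundary non-attainment / comparison estimates there. The key technical point that makes $\varsigma\le 0$ relevant is not the boundary behaviour itself but the fact, proved below (this mirrors the role of $\varsigma\le 0$ in Theorem \ref{Propfinitness}), that the dual relation forces moments to stay bounded; however for the pure statement of Lemma \ref{lemma3} the hypothesis $\varsigma\le 0$ is only needed insofar as it guarantees the coefficients $b_i,\pi_i$ are summable in the combinations appearing in $\mu,\sigma$, so that $\mu$ and $\sigma^2$ are well-defined finite continuous functions on $[0,1]$ — I would make this dependence explicit.

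The main obstacle I anticipate is handling the jump term in the pathwise-uniqueness estimate simultaneously with the Yamada--Watanabe (non-Lipschitz) treatment of $\sigma$: the classical Yamada--Watanabe argument uses a sequence of smooth approximations $\phi_n$ to $|x|$ with $\phi_n''(x)\le 2/(n\,|x|)$, and one must check that the jump terms $\E\int_0^t\!\!\int \big[\phi_n(X_{s-}-X'_{s-}+\Delta)-\phi_n(X_{s-}-X'_{s-}) - \phi_n'(X_{s-}-X'_{s-})\Delta\big]\,z^{-2}\Lambda(\ud z)\ud u\,\ud s$, with $\Delta = z(g(X_{s-},u)-g(X'_{s-},u))$, stay controlled as $n\to\infty$. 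Because $\phi_n$ is $1$-Lipschitz this difference is bounded by $2\int |\Delta|\,z^{-2}\Lambda(\ud z)\ud u \le 2\Lambda((0,1])\,\E|X_{s-}-X'_{s-}|$ after integrating the $u$-variable, so the jump part contributes a term linear in $\E|X_s-X'_s|$ and Gronwall closes the argument; I would present this estimate carefully since it is the one place where the finiteness $\Lambda((0,1])<\infty$ and the precise form of $g$ both get used. A secondary, more routine obstacle is verifying weak existence rigorously — I would do this by a standard Euler/tightness scheme or by citing an existence theorem for SDEs with jumps and bounded continuous coefficients (e.g. via the martingale problem for the generator of \eqref{eq:dualna}), since all coefficients are bounded on a neighbourhood of $[0,1]$ and the jump measure is finite.
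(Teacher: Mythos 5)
Your overall architecture matches the paper's: show that the truncated coefficients confine a solution started in $[0,1]$ to $[0,1]$, verify that $\mu$ is Lipschitz and $\sigma$ is H\"older-$1/2$ (via $\sigma^2$ Lipschitz), establish a continuity bound for the jump coefficient, and conclude by a Yamada--Watanabe-type theorem for jump SDEs. The paper delegates the last step to Corollary 6.2 and conditions (3.a), (3.b), (5.a) of Li and Pu \cite{LiPu} (also checking that $x\mapsto x+g(x,u)$ is non-decreasing), whereas you attempt to reprove it by hand, and it is precisely there that your argument breaks. You bound the Yamada--Watanabe remainder for the jump part by $2\int\!\!\int|\Delta|\,z^{-2}\Lambda(\ud z)\,\ud u$ with $|\Delta|=z\,|g(x,u)-g(y,u)|$; integrating out $u$ this is of order $|x-y|\int_{(0,1]}z^{-1}\Lambda(\ud z)$, not $|x-y|\,\Lambda((0,1])$ as you claim, and $\int_{(0,1]}z^{-1}\Lambda(\ud z)$ is \emph{not} finite for a general finite measure $\Lambda$ (take $\Lambda$ Lebesgue on $[0,1]$). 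The reason the condition verified in the paper carries the weight $z^{2}$ inside the inner integral, $\int\frac{\Lambda(\ud z)}{z^{2}}\int_0^1 z^{2}|g(x,r)-g(y,r)|\,\ud r\le|x-y|\,\Lambda((0,1])$, is that the jump term must be controlled through the \emph{second} moment of the displacement, $\int\!\!\int\Delta^{2}\,z^{-2}\Lambda(\ud z)\,\ud u\le 4\,\Lambda((0,1])\,|x-y|$, i.e.\ treated like the H\"older-$1/2$ diffusion coefficient rather than like a Lipschitz drift. As written, your first-order estimate does not close the Gronwall loop.

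A second, conceptual error: you assert that the hypothesis $\varsigma\le 0$ is needed only to guarantee summability of the coefficients. Summability is already part of the standing assumptions on the model; the actual role of $\varsigma\le 0$ (with $a=0$) is to make $\sigma^{2}$ nonnegative on $[0,1]$, since $\sigma^{2}(x)=x(1-x)\bigl(c-\sum_i b_i\,x(1+x+\dots+x^{i-1})\bigr)\ge-\varsigma\,x(1-x)$, and if $c<\sum_i i b_i$ then $\sigma^{2}(x)<0$ for $x$ near $1$, so $\sigma$ is not even a well-defined real-valued coefficient. Your containment argument at the boundaries is essentially the paper's --- the paper observes that for $x<0$ the truncated coefficients reduce to $\mu=d$, $\sigma=0$, $g=0$, so the path is deterministic and non-decreasing below $0$, yielding a contradiction --- and that part is fine.
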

 \begin{proof} We first observe that if $X$ satisfies (\ref{eq:dualna}) with $X_0\in [0,1]$, a.s., then $X_t\in[0,1]$ for all $t\in \mathbb{R}_+$, a.s.  
  Suppose that there exist $\epsilon>0$ such that $\tau:= \inf\{t\ge 0: X_t\le -\epsilon\}$ is finite with positive probability. Then on the event $\{\tau<\infty\}$, we have $X_{\tau}=-\epsilon$
and 
\[
\tau>\tau_0=\inf\{s<\tau: X_t\le 0, \forall s\le t\le \tau \}.
\] So we take $r\ge 0$ such that $\{\tau_0\le r<\tau\}$ occurs with strictly positive probability and on that event, we observe
\[
X_{t\land \tau}=X_{r\land \tau}+ d(t\land \tau -r\land \tau), \qquad t\ge r,
\]
 implying that $t\mapsto X_{t\land \tau}$ is non-decreasing on $[r,\infty)$. Since $X_r>-\epsilon$ on $\{\tau_0\le r<\tau\}$, we get a contradiction. The previous argument implies that $X_t\ge 0$, a.s. A similar argument proves that $X_t\le 1$, a.s. We leave the details to the reader.
  
 Now, we show that $X$ has a unique strong solution on $[0,1]$. In order to do so, we first observe that for $x, y \in [0,1]$
  \[
  |\mu(x)-\mu(y)|\le \left(d + \sum_{i\in \mathbb{N}}(i+1)\pi_i\right)|x-y|.
 \]
The latter inequality implies that  $\mu$ is Lipschitz continuous on $[0,1]$.  

 On the other hand, we also observe that  $\sigma$ satisfies, for all $x,y\in [0,1]$ ,
\begin{align*}
|\sigma^2(x)-\sigma^2(y)|&\leq \left|c(x-y)+c(y^2-x^2)+\sum_{i\in \mathbb{N}} b_i\Big(x^{2+i}-y^{i+2} -(x^{2}-y^{2})\Big)\right|\\
&\leq \left(3c+\sum_{i\in \mathbb{N}} b_i(i+2)+2\sum_{i\in \mathbb{N}} b_i\right)|x-y|,
\end{align*}
 implying from our assumptions  that $\sigma$ is H\"older continuous on $[0,1]$ with exponent 1/2.
 
 Finally, we observe from Corollary 6.2  in Li and Pu \cite{LiPu}, that $x\mapsto x+g(x, u)$ is non-decreasing and for any $0\le x, y\le 1$,
 \[
 \int_{(0,1]}\frac{\Lambda(\ud z)}{z^2}\int_0^1 \ud r\, z^2|g(x,r)-g(y,r)|\le |x-y|\int_{(0,1]}\Lambda(\ud z).
 \] 
Hence conditions (3.a),(3.b) and (5.a) in Li and Pu \cite{LiPu} are satisfied, implying that there is a unique strong solution to  the SDE (\ref{eq:dualna}).
 \end{proof}

 Let $X=(X_t, t\in \mathbb{R}_{+,0})$  be the unique strong solution of the SDE (\ref{eq:dualna}). We denote by $\mathcal{A}$ for its infinitesimal generator of the process X, which satisifes
\[
\mathcal{A}f(x):=\lim_{t\downarrow 0}\frac{\mathbb{E}_{x}[f(X_t)]-f(x)}{t}
\]
for any $f:[0,1]\mapsto\mathbb{R}$  continuous such that the limit exist uniformly on $x\in[0,1]$. If such limit exist then we say that $f\in\mathcal{D}_{\mathcal{A}}$, the domain of the  infinitesimal generator $\mathcal{A}$. 
Observe, from It\^o's formula,  that the  infinitesimal generator $\mathcal{A}$ satisfies  for $f\in C^2_b([0,1], \mathbb{R})$, the set of twice continuously differentiable bounded functions, and $x\in [0,1]$
\[
\begin{split}
\mathcal{A}f(x)&=\mu(x)f^\prime(x)+\frac{\sigma^2(x)}{2}f^{\prime\prime}(x)\\
&\hspace{1.5cm}+\int_{[0,1]}\Big(xf(x(1-z)+z)+(1-x)f(x(1-z))-f(x)\Big)\frac{\Lambda(\ud z)}{z^2},
\end{split}
\]
where $\mu, \sigma:[0,1]\to\mathbb{R}$  and $\Lambda$ are defined as above. In other words, for $f\in C^2_b([0,1], \mathbb{R})$, the process
\[
f(X_t)-\int_0^t\mathcal{A}f(X_s)\ud s, 
\]
is a martingale. On the other hand, we also observe that $C^2_b([0,1], \mathbb{R})$ is dense in $C([0,1], \mathbb{R})$, the set of continuous functions from $[0,1]$ to $\mathbb{R}$. It is not so difficult to see that the generator $\mathcal{A}$ satisfies the positive maximum principle. Since $[0,1]$ is  a compact space  and the space of all polynomials on [0,1] is dense on $C([0,1], \mathbb{R})$, the Hille-Yosida theorem implies that the closure of $\mathcal{A}$ restricted on $C^2_b([0,1], \mathbb{R})$ generates a strongly continuous, time homogenous, positive contraction Feller-semigroup on $C([0,1], \mathbb{R})$, see for instance Theorem 4.2.2 and Lemma 4.2.3 in \cite{EK}. Theorem 4.1.1 and Proposition 4.2.9 imply that the process $X$ is Feller.

 Recall that  $\mathbb{P}_x$  and $\mathbf{P}_n,$ denote the laws of $X$ starting from $x\in[0,1]$ and   $Z$ starting from $n\ge 0$, respectively. Moreover, from our assumptions we have  that $\mathbf{P}_n(Z_t<\infty)=1$ for any $n\in\mathbb{N}_0$ and $t\in\mathbb{R}_{+,0}$.

In practice, an effective methodology to show duality between two Markov processes is via their infinitesimal generator.  Indeed, the following result  is  a  special case of Theorem 4.11 and Corollary 4.15 in Ethier and  Kurtz \cite{EK}  and can also be seen as a small modification of Proposition 1.2 of Jansen and Kurt  in \cite{JK14}.

\begin{proposition}\label{prop:provingduality}
Let $(Y^{(1)}_t, t\in\mathbb{R}_{+,0})$ and $(Y^{(2)}_t, t\in\mathbb{R}_{+,0})$ be two Markov processes  taking values on $E_1$ and $ E_2$, respectively. Let $H:E_1\times E_2\rightarrow \mathbb{R}$ be a bounded and continuous function and assume that there exist functions $g_i:E_1\times E_2\mapsto \mathbb{R}$, for $i=1,2$, such that for every $n\in E_1,x\in E_2$ and every $T>0$,  the processes $(M_t^{(1)}, 0\le t\le T)$ and $(M_t^{(2)}, 0\le t\le T)$, defined as follows
\begin{eqnarray}
M_t^{(1)}:=H(Y^{(1)}_t,x)&-&\int_0^t g_1(Y^{(1)}_s,x)\ud s\label{MG1}\\
M_t^{(2)}:=H(n,Y^{(2)}_t)&-&\int_0^t g_2(n, Y^{(2)}_s)\ud s \label{MG2}
\end{eqnarray}
are martingales with respect to  the natural filtration of $Y^{(1)}_t$ and $Y^{(2)}_t$ respetively. Then, if 
$$
g_1(n,x)=g_2(n,x)\qquad \textrm{for all}\qquad  n\in E_1,x\in E_2,
$$
the processes $(Y^{(1)}_t, t\in\mathbb{R}_{+,0})$ and $(Y^{(2)}, t\in\mathbb{R}_{+,0})$ are dual with respect to $H$. 
\end{proposition}
This proposition is a direct consequence of Theorem 4.11 in Ethier Kurtz \cite{EK} taking $H$ bounded and continuous and $\alpha=\beta=0$.

If the function $H$ is such that $H(n,x)=x^n$, for $x\in [0,1]$ and $n\in \mathbb{N}_0$,   the above duality relationship is known as {\it moment duality.} The moment duality between $X$ and $Z$ reads as follows.
\begin{lemma}\label{lemma1} Assume $|\mathbf{m}|<\infty$, $\varsigma\le 0$ and $a=0$. Let   $X$ be the unique strong solution of the SDE (\ref{eq:dualna}) then it  is the unique (in distribution) moment dual of $Z$ a (sub)critical branching process with  interactions  and no annihilation. More precisely, for $x\in [0,1]$ and $n\in\mathbb{N}_{0}$, we have 

$$
\ensuremath{\mathbb{E}}_x[X_t^n]=\mathbf{E}_n[x^{Z_t}]\qquad\text{for}\quad  t\in\mathbb{R}_{+,0}. 
$$

\end{lemma}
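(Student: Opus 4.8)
The plan is to apply Proposition \ref{prop:provingduality} with $H(x,n)=x^n$, $Y^{(1)}=X$ and $Y^{(2)}=Z$. First I would compute $\mathcal{A}$ applied to the map $x\mapsto x^n$ for a fixed $n\in\overline{\mathbb{N}}$. Since $x^n\in C^2_b([0,1],\R)$, this is legitimate and yields, using the explicit form of $\mathcal{A}$ recalled above, the drift contribution $\mu(x)\,n x^{n-1}$, the diffusion contribution $\tfrac12\sigma^2(x)\,n(n-1)x^{n-2}$, and the jump contribution $\int_{[0,1]}\big(x(x(1-z)+z)^n+(1-x)(x(1-z))^n-x^n\big)z^{-2}\Lambda(\ud z)$. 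Substituting the definitions of $\mu$ and $\sigma^2$, the drift term becomes $dn x^{n-1}(1-x)+\sum_i \pi_i n(x^{n+i}-x^n)$ and the diffusion term becomes $c\,\tfrac{n(n-1)}2(x^{n-1}-x^n)+\sum_i b_i\tfrac{n(n-1)}2(x^{n+i}-x^n)$. For the jump part I would expand $(x(1-z)+z)^n$ by the binomial theorem and integrate term by term against $z^{-2}\Lambda(\ud z)$, recognising $\int_0^1 z^{k}(1-z)^{n-k}z^{-2}\Lambda(\ud z)=\lambda_{n,k}$, to obtain $\sum_{k=2}^n\binom nk\lambda_{n,k}(x^{n-k+1}-x^n)$ after cancellation of the $k=0$ and $k=1$ terms.

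Next I would compute $Q$ applied to the map $n\mapsto x^n$ for fixed $x\in[0,1]$, using the generator \eqref{GeneratorLB}. Since $x\in[0,1]$ the function $n\mapsto x^n$ is bounded and, as $Z$ is conservative by Theorem \ref{Propfinitness}, this lies in $\mathcal{D}_Q$. Term by term one gets $dn(x^{n-1}-x^n)$, $\sum_i n\pi_i(x^{n+i}-x^n)$, $n(n-1)c(x^{n-1}-x^n)$, $n(n-1)\sum_i b_i(x^{n+i}-x^n)$ (there is no annihilation term since $a=0$), and $\sum_{k=2}^n\binom nk\lambda_{n,k}(x^{n-k+1}-x^n)$. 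Comparing with the expression for $\mathcal{A}(\cdot)^n$ above: the $d$-terms match, the $\pi_i$-terms match, the $\lambda$-terms match verbatim, and for the quadratic-in-$n$ terms one checks $c\,\tfrac{n(n-1)}2(x^{n-1}-x^n)$ versus $c\,n(n-1)(x^{n-1}-x^n)$ — these do not look equal at first glance, so the real content of the calculation is that the $\tfrac12$ is compensated because $x^{n-1}-x^n$ from the diffusion combines with the $n x^{n-1}(1-x)$ piece; more carefully, one should keep the first-derivative and second-derivative pieces together and verify the algebraic identity $n x^{n-1}(1-x)+\tfrac{n(n-1)}2(x^{n-1}-x^n)=\tfrac n2(n+1)(x^{n-1}-x^n)+\dots$, i.e. the bookkeeping must be done so that the $\pi_1$-contribution to $\mu$ (the $x^2-x$ term) and the $c$-contribution to $\sigma^2$ recombine correctly. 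This matching identity $\mathcal{A}H(x,n)=QH(x,n)$ for all $x\in[0,1]$, $n\in\overline{\mathbb{N}}$ is the heart of the proof.

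Having established the generator identity, I would verify the hypotheses of Proposition \ref{prop:provingduality}: for fixed $x$, $H(x,\cdot)=x^\cdot$ and $P^{Z}_tH(x,\cdot)=\mathbf{E}_\cdot[x^{Z_t}]$ are bounded functions on $\overline{\mathbb{N}}$ and hence in $\mathcal{D}_Q$ (using conservativeness from Theorem \ref{Propfinitness} and the Feller property); for fixed $n$, $H(\cdot,n)\in C^2_b([0,1])\subset\mathcal{D}_{\mathcal{A}}$, and $P^X_tH(\cdot,n)=\E_\cdot[X_t^n]$ must be checked to be $C^2$ on $[0,1]$ — here one can appeal to the Feller/regularity properties of the jump-diffusion $X$ (existence of a unique strong solution from Lemma \ref{lemma3}), or alternatively bypass this by first proving the duality for a dense class and then extending. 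Proposition \ref{prop:provingduality} then gives $\E_x[X_t^n]=\mathbf{E}_n[x^{Z_t}]$ for all $t>0$. Finally, uniqueness in distribution of the moment dual follows because the moments $\mathbf{E}_n[x^{Z_t}]$, $n\in\overline{\mathbb{N}}$, of the $[0,1]$-valued random variable $X_t$ determine its law (a distribution on a bounded interval is characterised by its moments), so any two moment duals have the same one-dimensional distributions and, being Markov, the same finite-dimensional distributions.

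The main obstacle I anticipate is the domain/regularity verification in Proposition \ref{prop:provingduality} — namely showing $P^X_tH(\cdot,n)\in\mathcal{D}_{\mathcal{A}}$ — since the jump-diffusion $X$ has an infinite-activity jump part when $\Lambda$ has a heavy atom structure and a degenerate (square-root-type, non-Lipschitz) diffusion coefficient vanishing at the boundary; the clean way around this is to use that $X$ is Feller (so $P^X_t$ maps $C_b$ to $C_b$) together with a localisation/approximation argument, or to invoke that the test functions $x^n$ form a core. The generator-matching computation itself, while lengthy, is routine once one is careful to recombine the drift and diffusion contributions of the $\pi_1$ and $c$ parameters rather than comparing coefficients naively.
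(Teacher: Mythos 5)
Your overall strategy is exactly the paper's: apply the Jansen--Kurt criterion (Proposition \ref{prop:provingduality}) to $H(x,n)=x^n$, match $\mathcal{A}(x^n)$ with $Q(x^n)$ term by term, handle the catastrophe part by the binomial expansion of $x(x(1-z)+z)^n+(1-x)(x(1-z))^n-x^n$ and the identification $\int_0^1 z^{k-2}(1-z)^{n-k}\Lambda(\ud z)=\lambda_{n,k}$, and deduce uniqueness in distribution from the fact that the monomials $x\mapsto x^n$ separate probability measures on $[0,1]$ (the paper cites Proposition 3.6 of \cite{JK14} for this last step). Your extra care about the domain hypotheses ($P^X_tH(\cdot,n)\in\mathcal{D}_{\mathcal{A}}$) is more than the paper offers; it simply asserts the generator identity and invokes the criterion.

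The one place where your argument goes wrong is your proposed resolution of the factor $\tfrac12$ on the quadratic terms. You correctly observe that $\tfrac{\sigma^2(x)}{2}\,n(n-1)x^{n-2}$ yields $c\,\tfrac{n(n-1)}{2}(x^{n-1}-x^n)+\sum_i b_i\tfrac{n(n-1)}{2}(x^{n+i}-x^n)$, whereas $Q(x^n)$ contains $c\,n(n-1)(x^{n-1}-x^n)+\sum_i b_i\,n(n-1)(x^{n+i}-x^n)$. But your claim that the missing factor of $2$ is recovered by recombining the drift contribution $nx^{n-1}\mu(x)$ with the diffusion contribution cannot work: the $d$- and $\pi_i$-pieces of $\mu(x)f'(x)$, namely $nd(x^{n-1}-x^n)$ and $n\pi_i(x^{n+i}-x^n)$, are consumed \emph{exactly} by the linear-in-$n$ terms $dn[f(n-1)-f(n)]$ and $n\pi_i[f(n+i)-f(n)]$ of $Q$, so there is nothing left over to donate to the quadratic-in-$n$ terms; the matching is genuinely coefficient-by-coefficient in the basis $\{x^{n+i}-x^n\}$. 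The discrepancy you noticed is a normalization inconsistency in the paper itself: the duality requires the second-order part of $\mathcal{A}$ to act as $\sigma^2(x)f''(x)$ with $\sigma^2(x)=c(x-x^2)+\sum_i b_i(x^{i+2}-x^2)$, i.e.\ either the $\tfrac12$ should be absorbed into the definition of $\sigma^2$ (as in Athreya--Swart, whose SDE carries $\sqrt{2cX_s(1-X_s)}$), or the rates in $Q$ should read $\tfrac{n(n-1)}{2}c$, etc. The paper's proof silently writes $\mathcal{A}(x^n)$ with coefficient $n(n-1)c$, which is the convention that makes the lemma true. So: same route as the paper, but replace your ``recombination'' step by this normalization remark, and the proof is sound.
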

\begin{proof}  We take $H(n,x)=x^n$, for $x\in [0,1]$ and $n\in \mathbb{N}$, and  when $n=0$, $H(0,x):=1$ for $x\in[0,1]$, which are continuous and bounded.

Now, we  claim that for  fixed $x\in[0,1]$, $H(n,x)$ is in the domain of $Q$ and for fixed $n\in \mathbb{N}_0$, it is in the domain of $\mathcal{A}$. The latter follows from the fact that $H(n,\cdot)$ is a polynomial on $[0,1]$ which clearly belongs to $C^2_b([0,1], \mathbb{R})$. Implicitly, we have
\[
H(n,X_t)-\int_0^t\mathcal{A}H(n, X_s)\ud s, 
\]
is a martingale.  The first claim follows  by recalling that linear combinations of exponential functions as in \eqref{lcexp} belongs to the domain of $Q$ (see Section \ref{intro}) and the fact that $H(n,x)=e^{n\log x }$, imply that the process
\[
H(Z_t,x)-\int_0^t QH(Z_s, x)\ud s, 
\]
is also a martingale.

Next, we observe that  for $x\in(0,1)$ and $n\in \mathbb{N}\setminus\{1\}$, 
\begin{equation*}
\begin{split}
\mathcal{A}H(n,x)&=nd(x^{n-1}-x^n)+n\sum_{i\in \mathbb{N}}\pi_{i}(x^{n+i}-x^n)\\
&\hspace{1cm}+n(n-1)\sum_{i\in \mathbb{N}} b_i(x^{n+i}-x^n)+n(n-1)c(x^{n-1}-x^n)\\
&\hspace{1.8cm}+\int_{[0,1]}\Big(x(x(1-z)+z)^n+(1-x)(x(1-z))^n-x^n\Big)\frac{\Lambda(\ud z)}{z^2}.
\end{split}
\end{equation*}
On the other hand,  let  $B$ be a Bernoulli r.v. taking the value 1 with probability $x$ and 0 with probability $1-x$. Thus, for $n\in \mathbb{N}\setminus\{1\}$, we observe 
\[
\begin{split}
x(x(1-z)+z)^n&+(1-x)(x(1-z))^n-x^n=\mathbb{E}\Big[(x(1-z)+Bz)^n\Big]-x^n\\
&=\mathbb{E}\left[\sum_{k=0}^n \dbinom{n}{k}x^{n-k}(1-z)^{n-k}z^k B^k\right]-x^n\\
&=\sum_{k=2}^n \dbinom{n}{k}x^{n-k+1}(1-z)^{n-k}z^k +x^n(1-z)^n+nx^n(1-z)^{n-1}z-x^n\\
&=\sum_{k=2}^n \dbinom{n}{k}x^{n-k+1}(1-z)^{n-k}z^k -x^n\Big(1-(1-z)^n-n(1-z)^{n-1}z\Big)\\
&=\sum_{k=2}^n \dbinom{n}{k}(1-z)^{n-k}z^k\Big(x^{n-k+1} -x^n\Big).\\
\end{split}
\]
Hence,   for $n\in\mathbb{N}\setminus\{1\}$, we deduce
\[
\int_{[0,1]}\Big(x(x(1-z)+z)^n+(1-x)(x(1-z))^n-x^n\Big)\frac{\Lambda(\ud z)}{z^2}=\sum_{k=2}^n\binom{n}{k}\lambda_{n,k}[x^{n-k+1}-x^n],
\]
implying that $\mathcal{A}H(n, x)=QH(n, x)$ for $x\in(0,1)$ and $n\in \mathbb{N}\setminus\{1\}$.   For the case $n=1$ and $x\in(0,1)$, we have 
\[
\mathcal{A}H(1, x)=d(1-x)+\sum_{i\in \mathbb{N}}\pi_{i}(x^{1+i}-x)=QH(1, x).
\]
Finally, we consider the remaining cases. For  $n\in\mathbb{N}$ and  $x=0$ or $x=1$,  it is clear  $\mathcal{A}H(n, x)=0=QH(n, x)$. Moreover, when $n=0$, we  have $QH(0, x)=0=\mathcal{A}H(0, x)$ for $x\in[0,1]$. 

Putting all pieces together, we deduce that  $\mathcal{A}H(n, x)=QH(n, x)$ for $x\in[0,1]$ and $n\in \mathbb{N}_0$.  Hence we can apply  Proposition \ref{prop:provingduality} and deduce the moment duality between $X$ and $Z$.
\end{proof}

We finish this section with the following useful Lemma which provides an upper bound for the expected fixation time of the process $X$ defined by (\ref{eq:dualna}) with $\mu\equiv0$ and $\Lambda\equiv 0$. We will use this Lemma for the proof of Theorems \ref{thm:criteria} and \ref{infinity}. In particular, we have that the fixation time  for  $X$ with $\mu\equiv 0$, $\Lambda\equiv 0$ and starting at $x\in[0,1]$ is finite $\mathbb{P}_x$-a.s.
\begin{lemma}\label{lemma:hittingtime1}
Let  $X$ be the process defined by the SDE (\ref{eq:dualna})  with $\varsigma<0$, $\Lambda\equiv 0$, $d=0$, $\pi_i=0$, for all $i\in \mathbb{N}$ and recall  $T_{0,1}=\inf\{t>0:X_t\in\{0,1\}\}$. Then  for any $x\in[0,1]$ 
\[
\ensuremath{\mathbb{E}}_x[T_{0,1}]\le -\frac{2}{c-\sum_{i=1}^\infty i b_i }\Big((1-x)\ln (1-x) +x\ln x\Big).
\]
Moreover, we have  $\mathbb{P}_x(X_{T_{0,1}}=1)=x$.
\end{lemma}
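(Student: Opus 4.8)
The plan is to exploit the fact that under the hypotheses ($\Lambda \equiv 0$, $d=0$, $\pi_i=0$ for all $i$) the process $X$ is a one-dimensional diffusion
\[
\ud X_t = \sqrt{\sigma^2(X_t)}\,\ud B_t, \qquad \sigma^2(x)=c(x-x^2)+\sum_{i\ge 1} b_i(x^{i+2}-x^2),
\]
with no drift, so $X$ is a bounded local martingale on $[0,1]$ and hence a genuine martingale; its only possible limit points as $t\to\infty$ are $0$ and $1$. The key quantitative input is that $\sigma^2(x)$ is bounded below on $[0,1]$ by a multiple of $x(1-x)$: indeed, writing $\sigma^2(x)=x(1-x)\bigl(c - \sum_{i\ge 1} b_i h_i(x)\bigr)$ for suitable polynomials $h_i$ with $0\le h_i(x)\le i$ on $[0,1]$ (since $(x^2-x^{i+2})/(x-x^2)=x\,\frac{1-x^i}{1-x}=x(1+x+\cdots+x^{i-1})\le i$), one gets
\[
\sigma^2(x)\ge x(1-x)\Bigl(c-\sum_{i\ge 1} i b_i\Bigr) = -x(1-x)\,\varsigma',
\]
where $\varsigma':=\sum_{i\ge 1} i b_i - c = \varsigma + 2a = \varsigma > 0$ here (there is no annihilation in this sublemma), so the lower bound is strictly positive on $(0,1)$.

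First I would establish the martingale claim $\mathbb{P}_x(X_{T_{0,1}}=1)=x$: by Lemma \ref{lemma3} (or its specialization) $X$ stays in $[0,1]$, $X$ is a bounded martingale, so $X_{t\wedge T_{0,1}}\to X_{T_{0,1}}$ a.s.\ and in $L^1$ on $\{T_{0,1}<\infty\}$, giving $x=\mathbb{E}_x[X_{T_{0,1}}]=\mathbb{P}_x(X_{T_{0,1}}=1)$ once finiteness of $T_{0,1}$ is known. For the expected-time bound, I would apply Itô/Dynkin to the function $\phi(x):=-\bigl((1-x)\ln(1-x)+x\ln x\bigr)$ (with $\phi(0)=\phi(1)=0$), which is smooth on $(0,1)$ with $\phi''(x)=-\frac{1}{x(1-x)}<0$. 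Then
\[
\mathcal{A}\phi(x)=\tfrac12\sigma^2(x)\phi''(x)=-\tfrac{\sigma^2(x)}{2x(1-x)}\le -\tfrac12\Bigl(\sum_{i\ge1} ib_i-c\Bigr) = -\tfrac12(-\varsigma')\cdot(-1)\,,
\]
i.e.\ $\mathcal{A}\phi(x)\le -\tfrac12\bigl(\sum_{i\ge 1} ib_i - c\bigr)=\tfrac12(c-\sum_{i\ge1} ib_i)$, a negative constant $-\kappa$ with $\kappa=\tfrac12(\sum_i i b_i - c)>0$. Dynkin's formula applied up to $T_{0,1}\wedge t$, together with $\phi\ge 0$, yields $\kappa\,\mathbb{E}_x[T_{0,1}\wedge t]\le \phi(x)$; letting $t\to\infty$ by monotone convergence gives $\mathbb{E}_x[T_{0,1}]\le \phi(x)/\kappa = -\frac{2}{c-\sum_i i b_i}\bigl((1-x)\ln(1-x)+x\ln x\bigr)$, which is exactly the claimed inequality, and in particular $T_{0,1}<\infty$ $\mathbb{P}_x$-a.s.

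The main technical obstacle is the singularity of $\phi''$ at the endpoints $0$ and $1$: $\phi$ is only $C^2$ on the open interval, so Dynkin's formula cannot be applied naively. The standard remedy I would use is a localization: fix small $\delta>0$, let $T_\delta=\inf\{t: X_t\notin(\delta,1-\delta)\}$, apply Itô's formula to $\phi$ on $[\delta,1-\delta]$ up to $T_\delta\wedge t$ (where $\phi\in C^2$ and the bound $\mathcal{A}\phi\le -\kappa$ holds uniformly), obtain $\kappa\,\mathbb{E}_x[T_\delta\wedge t]\le \phi(x)-\mathbb{E}_x[\phi(X_{T_\delta\wedge t})]\le \phi(x)$ since $\phi\ge0$, and then let $t\to\infty$ and $\delta\downarrow 0$; a short argument (again using that $X$ is a bounded martingale, hence converges, and that $\sigma^2$ is bounded below on compact subsets of $(0,1)$ so $X$ cannot stay forever in $(\delta,1-\delta)$) identifies $\lim_{\delta\downarrow0}T_\delta = T_{0,1}$ a.s., and monotone convergence finishes the estimate. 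One should also note $\phi$ is bounded and continuous on $[0,1]$, so all the expectations above are finite and the interchange of limits is justified.
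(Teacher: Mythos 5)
Your argument is correct and reaches the stated bound, but it proceeds by a different device than the paper. The paper writes $X$ as the driftless diffusion $X_t=x+\int_0^t\sigma(X_s)\,\ud B_s$, invokes the Green-function/speed-measure formula $\mathbb{E}_x[T_{0,1}]=2\int_0^1 G(x,y)\sigma^{-2}(y)\,\ud y$ (Corollary VII.3.8 of Revuz--Yor), inserts the same lower bound $\sigma^2(y)\ge\bigl(c-\sum_{i\ge1}ib_i\bigr)y(1-y)$ that you derive, and evaluates the resulting integral to get exactly $-\tfrac{2}{c-\sum_i ib_i}\bigl((1-x)\ln(1-x)+x\ln x\bigr)$. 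You instead verify directly that $\phi(x)=-\bigl((1-x)\ln(1-x)+x\ln x\bigr)$ --- which is precisely the function $2\int_0^1 G(x,\cdot)\frac{\ud y}{y(1-y)}$ the paper ends up computing --- satisfies $\tfrac12\sigma^2\phi''\le-\kappa$ with $\kappa=\tfrac12\bigl(c-\sum_i ib_i\bigr)>0$, and run Dynkin's formula with localization away from the endpoints. This is more elementary (no citation of the exit-time formula) at the cost of the localization step, which you correctly identify and handle; the identification $\lim_{\delta\downarrow0}T_\delta=T_{0,1}$ and the monotone-convergence passage are fine, and the martingale argument for $\mathbb{P}_x(X_{T_{0,1}}=1)=x$ is identical to the paper's. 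One thing to clean up: your write-up contains sign slips around $\varsigma'$ --- under the hypothesis $\varsigma<0$ with $a=0$ one has $\sum_i ib_i-c=\varsigma<0$, so the assertions ``$\varsigma'>0$ here'' and ``$\kappa=\tfrac12(\sum_i ib_i-c)>0$'' are wrong as written; the quantity that is positive is $c-\sum_i ib_i=-\varsigma$, and your final formula $\phi(x)/\kappa$ with $\kappa=\tfrac12(c-\sum_i ib_i)$ is the intended (and correct) one.
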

\begin{proof} Since $\Lambda\equiv 0$, $d=0$ and $\pi_i=0$, for all $i\in \mathbb{N}$, we observe that the process $X$, under $\mathbb{P}_x$, satisfies the following SDE
\begin{equation}\label{diffmart}
X_t=x+\int_0^t\sigma(X_s)\ud B_s,
\end{equation}
where $\sigma$ is defined as above. The process $X$ is a diffusion taking  values on $[0,1]$ with 
 $\{0,1\}$ as absorbing states. Observe that its associated scale function is given by $S(y)=y+\mathbf{c}$, where $\mathbf{c}$ is a constant, and its associated speed measure satisfies
 \[
 m(\ud y)=\frac{2}{\sigma^2(y)}\ud y.
 \]
 Hence from Corollary VII 3.8 in \cite{RevuzYor}, we have 
 \[
 \mathbb{E}_x[T_{0,1}]=2\int_0^1 G(x,y)\frac{1}{\sigma^2(y)}\ud y, 
 \]
 where
 \[
 G(x,y)=\left\{ \begin{array}{ll}
 x(1-y) & \textrm{ if } 0\le x\le y\le 1,\\
 y(1-x) & \textrm{ if } 0\le y\le x\le 1,\\
 0 & \textrm{ otherwise.}
 \end{array}
 \right .
 \]
 Now, we observe that for $y\in[0,1]$, the following inequality holds
\[
\sigma^2(y)\ge \Big( c-\sum_{i\in \mathbb{N}} i b_i \Big)y (1-y),
\]
implying 
\[
\begin{split}
\mathbb{E}_x[T_{0,1}]&\le \frac{2}{\Big( c-\sum_{i\in \mathbb{N}}^\infty i b_i \Big)}\left(\int_0^x \frac{y(1-x)}{y(1-y)}\ud y +\int_x^1 \frac{x(1-y)}{y(1-y)}\ud y\right)\\
&=- \frac{2}{\Big( c-\sum_{i\in \mathbb{N}} i b_i \Big)}\Big((1-x)\ln (1-x) +x\ln x\Big),
\end{split}
\]
which provides the desired inequality.

Finally, since $X$ is a local martingale taking values on $[0,1]$, it  is a uniformly integrable martingale. Hence from  the optional stopping Theorem, we deduce
\[
x=\mathbb{E}_1[X_0]=\mathbb{E}_x[X_{ T_{0,1}}]=\mathbb{P}_x(X_{T_{0,1}}=1).
\]
Now the proof is completed.
\end{proof}

\subsection{Proof of Theorem \ref{thm:criteria} \& Corollary \ref{thm:criteriaX} }
For the proof of Theorem \ref{thm:criteria}, we need the following Lemma.
\begin{lemma}\label{lem:criteria}
Assume that there is no annihilation, no death and no branching, i.e. $\varsigma<0$, $a=0$, $d=0$ and $\rho=0$. Then
\[
\sup_{n\in \mathbb{N}}\mathbf{E}_n\big[\tau_1\big]<\infty,
\]
where $\tau_{1}=\inf\{t>0:Z_t=1\}$.
\end{lemma}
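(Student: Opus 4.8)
The plan is to exploit the moment duality between $Z$ and its jump-diffusion dual $X$ from Lemma \ref{lemma1}, together with the explicit bound on the fixation time obtained in Lemma \ref{lemma:hittingtime1}. Under the hypotheses $a=d=\rho=0$, the branching process with interactions $Z$ reduces to a pure competition--cooperation--catastrophe process, and its dual is the SDE \eqref{eq:dualna} with $\mu\equiv 0$ and $\pi_i\equiv 0$ for all $i$; if in addition we \emph{also} set $\Lambda\equiv 0$ this is exactly the diffusion treated in Lemma \ref{lemma:hittingtime1}, so at least in that sub-case the dual hits $\{0,1\}$ in finite expected time uniformly bounded in a way we can quantify. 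The strategy is therefore: (i) translate the statement $\sup_{n\ge 1}\mathbf{E}_n[\tau_1]<\infty$ into a statement about the dual's absorption probabilities via the duality identity $\mathbf{E}_n[x^{Z_t}]=\mathbb{E}_x[X_t^n]$; and (ii) use the dual's fixation estimate plus a Markov-chain comparison to bound the tail $\mathbf{P}_n(\tau_1>t)$ uniformly in $n$ and integrate.

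First I would observe that when $a=d=\rho=0$ the only transitions of $Z$ that change the population are downward moves (competition kills one individual, catastrophes kill $k-1$) and upward moves driven by cooperation at rate $n(n-1)b_i$. Since $\{0\}$ is not accessible (cooperation and competition need two individuals and $d=\rho=0$) and $\{1\}$ is absorbing, $\tau_1$ is the absorption time. To bound $\mathbf{E}_n[\tau_1]$ uniformly I would first handle the catastrophe-free case $\Lambda\equiv 0$ directly: by duality with the diffusion $X$ of Lemma \ref{lemma:hittingtime1}, $\mathbf{P}_n(Z_t\le 1)=\mathbf{P}_n(Z_t=1)+\mathbf{P}_n(Z_t=0)$, and one has $\mathbf{E}_n[x^{Z_t}]=\mathbb{E}_x[X_t^n]$. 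Letting $n\to\infty$ (which is legitimate since $\mathbf{P}_\infty$ is the decreasing limit of $\mathbf{P}_n$, cf. Theorem \ref{infinity}), $\mathbb{E}_x[X_t^n]\to\mathbb{P}_x(X_t=1)$, so $\mathbf{P}_\infty(Z_t=1)=\lim_n\mathbf{E}_n[x^{Z_t}]$ evaluated appropriately; more usefully, monotonicity in the starting state gives $\mathbf{P}_n(\tau_1>t)\le\mathbf{P}_\infty(\tau_1>t)$ for every $n$, so it suffices to bound the single quantity $\mathbf{E}_\infty[\tau_1]$. Then $\mathbf{P}_\infty(\tau_1>t)=\mathbf{P}_\infty(Z_t\ge 2)=1-\lim_{n}\mathbb{E}_{x}[\cdots]$; rather than pushing this through I would instead bound $\mathbf{P}_\infty(\tau_1>t)$ by the probability that the dual diffusion started from a suitable $x$ has not yet been absorbed, using the duality identity at a fixed $x\in(0,1)$ and the exponential-type tail that follows from $\sup_x\mathbb{E}_x[T_{0,1}]<\infty$ in Lemma \ref{lemma:hittingtime1}. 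Concretely, $\mathbb{E}_x[X_t^n]\to\mathbb{P}_x(X_t\in\{0,1\}^{c}\text{ contributes }0)$; since $0<X_t<1$ forces $X_t^n\to 0$ boundedly, dominated convergence gives $\mathbf{P}_\infty(Z_t=\text{large})\to$ the dual's not-yet-fixated probability, which is $\le \mathbb{E}_x[T_{0,1}]/t$ by Markov's inequality, uniformly small as $t\to\infty$. Integrating the tail then yields $\mathbf{E}_\infty[\tau_1]<\infty$, hence $\sup_{n\ge 1}\mathbf{E}_n[\tau_1]<\infty$.

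For the general case with $\Lambda\not\equiv 0$, I would argue that adding catastrophes only \emph{speeds up} the return to $\{1\}$: a $\Lambda$-catastrophe at rate $\binom{n}{k}\lambda_{n,k}$ moves $n\mapsto n-k+1\le n-1$, so there is a natural coupling in which the process with catastrophes is stochastically dominated by the catastrophe-free one, at least until the first time they meet, and $\tau_1$ can only decrease. Making this precise requires either a pathwise coupling of the two chains driven by the same Poisson clocks for the common (competition and cooperation) transitions plus extra independent clocks for catastrophes, together with a monotonicity argument showing the dominated chain hits $1$ no later — or, alternatively, one can re-run the duality argument directly with general $\Lambda$, using that the dual jump-diffusion $X$ still has $\sup_{x}\mathbb{E}_x[T_{0,1}]<\infty$ (the jump part pushes $X$ toward $\{0,1\}$ as well, since each jump sends $X_{t-}\mapsto X_{t-}(1-z)+z$ or $X_{t-}(1-z)$, i.e. toward the boundary). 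I would favor the coupling route for cleanliness.

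The main obstacle I anticipate is the passage from the dual-process absorption estimate to a \emph{uniform} (in $n$, including $n=\infty$) tail bound on $\tau_1$ for $Z$: the duality identity $\mathbf{E}_n[x^{Z_t}]=\mathbb{E}_x[X_t^n]$ gives information about $\mathbf{P}_n(Z_t=j)$ only through generating functions, and extracting a clean bound on $\mathbf{P}_n(Z_t\ge 2)$ that does not blow up as $n\to\infty$ requires care — one must exploit that $x^{Z_t}$ with $Z_t\ge 2$ is at most $x^2$, so $\mathbf{P}_n(Z_t\ge 2)\le \mathbb{E}_x[X_t^n] x^{-?}$ is the wrong direction, and instead one should write $\mathbf{P}_n(Z_t\ge 2)=1-\mathbf{P}_n(Z_t=1)-\mathbf{P}_n(Z_t=0)$ and bound $\mathbf{P}_n(Z_t\le 1)$ from below using $\liminf_t \mathbb{E}_x[X_t^n]\ge \mathbb{P}_x(X_{T_{0,1}}=1)=x$ combined with the finite expected fixation time to control how fast the $\liminf$ is approached. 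This quantitative step, rather than the qualitative duality, is where the real work lies; the rest is a routine application of the identity $\mathbf{E}_n[\tau_1]=\int_0^\infty \mathbf{P}_n(\tau_1>t)\,\ud t$ together with the uniform bound and the (already established) monotonicity $\mathbf{P}_n(\tau_1>t)\le\mathbf{P}_{n+1}(\tau_1>t)$.
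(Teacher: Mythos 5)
Your overall strategy is the paper's: reduce to $\Lambda\equiv 0$ by an order-preserving coupling (in the direction you indicate — catastrophes only accelerate the descent to $\{1\}$), then combine the moment duality $\mathbf{E}_n[x^{Z_t}]=\mathbb{E}_x[X_t^n]$ with the fixation-time bound of Lemma \ref{lemma:hittingtime1}. However, the one quantitative step you actually write out does not close. Markov's inequality gives $\mathbb{P}_x(T_{0,1}>t)\le \mathbb{E}_x[T_{0,1}]/t$, and the bound you extract for $Z$ is therefore a tail of order $1/t$ for $\mathbf{P}_n(\tau_1>t)$; since $\int_1^\infty t^{-1}\,\ud t=\infty$, ``integrating the tail'' does not yield a finite expectation. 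The missing idea is the renewal step that upgrades a single uniform-in-$n$ estimate at one fixed time into a geometric tail. Concretely: decomposing on $\{T_{0,1}<t\}$ gives $\mathbb{E}_x[X_t^n]\ge \mathbb{P}_x(X_{T_{0,1}}=1)-\mathbb{P}_x(T_{0,1}\ge t)=x-\mathbb{P}_x(T_{0,1}\ge t)$, a bound \emph{uniform in $n$}; combined with the elementary inequality $\mathbf{P}_n(Z_t=1)\ge x^{-1}\mathbf{E}_n[x^{Z_t}]-x$ one can fix $x$ small and $t_1$ large so that $\inf_{n\ge 1}\mathbf{P}_n(Z_{t_1}=1)\ge\epsilon>0$. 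Since $\{1\}$ is absorbing when $a=d=\rho=0$, the Markov property at the times $jt_1$ then yields $\mathbf{P}_n(\tau_1>kt_1)\le(1-\epsilon)^k$, hence $\sup_n\mathbf{E}_n[\tau_1]\le t_1/\epsilon$. This is precisely the paper's argument; you correctly identify this conversion as ``where the real work lies,'' but you never perform it, and the computation you substitute for it is invalid.

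A secondary problem is the detour through the entrance law $\mathbf{P}_\infty$. You appeal to Theorem \ref{infinity} to legitimise taking $n\to\infty$, but the proof of that theorem relies on the present lemma, so you are courting circularity; and the detour is unnecessary, because the duality bound $\mathbb{E}_x[X_t^n]\ge x-\mathbb{P}_x(T_{0,1}\ge t)$ above already holds uniformly in finite $n$, which is all the statement requires. The reduction to the catastrophe-free case via a pathwise order-preserving coupling is fine and matches the paper (which invokes the conditions of L\'opez et al.\ for the existence of such a coupling).
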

\begin{proof}
In order to prove  the statement,  it is enough to show the case when there are no catastrophes since  an order preserving coupling can be established. Indeed, we can stochastically dominate the case with catastrophes with another branching processes with interactions with the same parameters but  with  no catastrophes. We refer to Section 2 of L\'opez et al. \cite{LMS} for the conditions on the intensities $(q_{i,j})_{i,j\in\mathbb{N}_0}$ in order to have  existence of an order preserving coupling.

From the  moment duality, i.e. Lemma \ref{lemma:characterization}, we get
\[
\mathbf{E}_n[x^{Z_t}]=\mathbb{E}_x[X^n_{ t}]=\mathbb{E}_x\Big[X^n_{ t}\mathbf{1}_{\{T_{0,1}\ge t\}}\Big]+\mathbb{E}_x\Big[X^n_{ T_{0,1}}\mathbf{1}_{\{T_{0,1}<t\}}\Big].
\]
Thus by taking $t$ goes to $\infty$, the dominated convergence Theorem and recalling that $T_{0,1}$ is finite $\mathbb{P}_x$-a.s. (see Lemma \ref{lemma:hittingtime1}), we observe
\begin{equation}\label{limitx}
\lim_{t\to \infty}\mathbf{E}_n[x^{Z_t}]=\mathbb{P}_x(X_{T_{0,1}}=1)=x.
\end{equation}
On the other hand, again from the moment duality between $X$ and $Z$, we deduce the following inequality
\[
\mathbf{P}_n(Z_t=1)=\frac{\mathbf{E}_n[x^{Z_t}]-\sum_{i=2}^\infty x^{i}\mathbf{P}_n(Z_t=i)}{x}\ge \frac{\mathbf{E}_n[x^{Z_t}]}{x}-x.
\]
Therefore from identity (\ref{limitx}), we get
$$
\lim_{t\rightarrow \infty}\mathbf{P}_n(Z_t=1)\geq 1-x.
$$
Implicitly,  there are $t_1>0$ and $\epsilon>0$ such that for every $n\in\mathbb{N},$
$$
\mathbf{P}_n(Z_{t_1}=1)>\epsilon.
$$
Next we observe, using the Markov property of $Z$ starting from $n\in \mathbb{N}$ at times $(j t_1)_{j\in \mathbb{N}}$, that $\sigma_1$ is stochastically dominated by   a Geometric random variable  with parameter $\epsilon$, here denoted by $\Upsilon_\epsilon$,  that bounds the number of steps that takes to the discretized Markov chain $(Z_{jt_1}, j\ge 0)$ to reach the state 1. In other words, for any $n,m\in \mathbb{N}$, we have
\[
\mathbf{P}_n(\tau_1> mt_1)\le \mathbb{P}(\Upsilon_\epsilon > mt_1).
\]
The latter inequality implies, in particular, that
$$
\sup_{n\in \mathbb{N}}\mathbf{E}_n\big[\tau_1\big]<\frac{t_1}{\epsilon},
$$
which completes the proof.
\end{proof}

\begin{proof}[Proof of Theorem  \ref{thm:criteria} ]
We first deal with the case with no catastrophes i.e. $\Lambda\equiv 0$. From our assumptions $Z$ has parameters $d=0,$ $c,\pi_i,b_i$,  for $i\in \mathbb{N}$,   such that  $\rho=\sum_{i\in \mathbb{N}} \pi_i>0$ and $\varsigma<0$. Our arguments will be based on a coupling argument.  More precisely, let us consider two branching processes with  interactions, $Z=(Z_t, t\in \mathbb{R}_{+,0})$ and $\overline{Z}=(\overline{Z}_t, t\in \mathbb{R}_{+,0})$, with respective parameters $a=d=0, c,b_i$ and $\pi_i$, for $i\in \mathbb{N}$,  such that  $\rho>0$; and $\overline{d}=\overline{a}=\sum_{i\in \mathbb{N}}\overline{\pi_i}=0$,
 $\overline{c}=c$ and  $\overline{b_i}=b_i+\epsilon \pi_i$, for all $i\in \mathbb{N}$, where $\epsilon$ is chosen positive and such that
\begin{equation}\label{eqepsilon}
-\overline{c}+\sum_{i\in \mathbb{N}} i \overline{b_i} =-c+\sum_{i\in \mathbb{N}}  i b_i +\epsilon \sum_{i\in \mathbb{N}}  i \pi_i< 0.
\end{equation}
We denote by $\overline{\mathbf{P}}_n$  the  law of $\overline{Z}$ starting from $\overline{Z}_0=n$ and observe, from the first part of the proof and under our assumptions,  that $\overline{Z}$ 
does not explode and gets absorbed in state $\{1\}$. Moreover, let $n_0=\inf\{n\in \mathbb{N}: 2< (n-1)\epsilon\}$ and observe  that for any starting condition $Z_0=\overline{Z}_0=n> n_0$, we necessarily have 
 \[
 n\pi_i+n(n-1)b_i<n(n-1)\overline{b}_i=\epsilon n(n-1)\pi_i+n(n-1)b_i.
 \]

Next,  we  consider the following coupling between the Markov chains  $Z$ and $\overline{Z}$.  Let us introduce  the process $\{(U_t,\overline{U}_t), t\in \mathbb{R}_{+,0}\}$  with the following dynamics: for all $t<\gamma_{n_0}:=\inf\{t>0: U_t\le n_0\}$, the couple $(U_t,\overline{U}_t)$ has the following transitions rates,  for $m\ge n$
\begin{equation}\label{coupling}
(U_t,\overline{U}_t)\text{ goes from }(n,m)\text{ to}\left\{ \begin{array}{ll}
(n+i,m+i), & \textrm{ with rate } n\pi_i+n(n-1)b_i,\\
(n,m+i), & \textrm{ with rate } m(m-1)\overline{b}_i-n\pi_i+n(n-1)b_i,\\
(n-1,m-1), & \textrm{ with rate } n(n-1)c,\\
(n,m-1), & \textrm{ with rate } m(m-1)c-n(n-1)c,\\

\end{array}
\right .
\end{equation}
and for  $t\geq \gamma_{n_0}$, the Markov chains $U_t$ and $\overline{U}_t$ evolve independently with the same transition rates as $Z$ and $\overline{Z}$, respectively. We denote by $\mathbf{P}_{(n,m)}$ the law of $\{(U_t,\overline{U}_t), t\in \mathbb{R}_{+,0}\}$ starting from $(n,m)$. Note that from  our construction, the Markov chains $U$ and $\overline{U}$ are equal in distribution to $Z$ and  $\overline{Z}$, respectively.  We refer to Chapters IV and V in Lindvall \cite{Lind} for further details of this type of couplings and stochastic dominance.

 Furthermore, we necessarily  have for all $n>n_0$
$$
\mathbf{P}_{(n,n)}\Big(U_t\leq \overline{U_t}, \textrm{ for } t<\gamma_{n_0}\Big)=1,$$
where $\mathbf{P}_{(n,n)}$ denotes the law of $\{(U_t,\overline{U}_t), t\in \mathbb{R}_{+,0}\}$ starting from $(n, n)$. Hence, from  Lemma \ref{lem:criteria} and the fact that $Z$ and $\overline{Z}$ are skip-down-free (or skip-free to the left), we deduce
\begin{equation}\label{eq:uniformbound1} 
\mathbf{E}_n[\tau_{n_0}]=\mathbb{E}[\gamma_{n_0}|U_0=n]\leq \mathbb{E}[\overline{\gamma}_{n_0}|\overline{U}_0=n]=\overline{\mathbf{E}}_n[\overline{\tau}_{n_0}]<\overline{\mathbf{E}}_n[\overline{\tau}_{1}]<K,
\end{equation}
where  ${\gamma}_{n_0}:=\inf\{t>0: {U}_t\le n_0\}$, $\overline{\gamma}_{n_0}:=\inf\{t>0: \overline{U}_t\le n_0\}$, ${\tau}_{i}:=\inf\{t>0:{Z}_t= i\}$, $\overline{\tau}_{i}:=\inf\{t>0:\overline{Z}_t= i\}$ and $K$ is a positive constant that does not depend on $n$.

Then the process $Z$ is positive recurrent and we apply Theorem 21.14 of \cite{LPW} to conclude that there exist a unique stationary distribution for $Z$, here denoted by $\nu$. To finish the proof of the first statement, we use Markov's inequality and  (\ref{eq:uniformbound1}) to deduce 
$$
\mathbf{P}_n(\tau_{n_0}>a)<\frac{K}{a}, \qquad  \textrm{for every}\quad n\ge n_0.
$$
This allow us to compute the total variation distance between  $\mathbf{P}_n(Z_t=\cdot)$ and $\nu(\cdot)$ by conditioning on the event $\{\tau_{n_0}>t_0\}$, where $t_0\in(0,t)$. In other words
\begin{align*}
||\mathbf{P}_n(Z_t=\cdot)-\nu(\cdot)||\leq& ||\mathbf{P}_{n}(Z_t=\cdot , \tau_{n_0}\leq t_0)-\nu(\cdot)||\\&\hspace{.5 cm}+ \mathbf{P}_n(\tau_{n_0}>t_{0})||\mathbf{P}_n(Z_t=\cdot|\tau_{n_0}>t_{0})-\nu(\cdot)||\\
\le &\int_0^{t_0}||\mathbf{P}_{n_0}(Z_{t-s}=\cdot)-\nu(\cdot)||\mathbf{P}_n(\tau_{n_0}\in \ud s)+\frac{K}{t_0}.
\end{align*}
Therefore using again Theorem 21.14 of \cite{LPW}, we  observe 
\[
||\mathbf{P}_{n_0}(Z_{t-s}=\cdot)-\nu(\cdot)||\rightarrow 0 \qquad \textrm{ as }\quad  t\rightarrow \infty, 
\]  for all $s\in[0,t_0]$ and thus we conclude that
\[
\lim_{t\rightarrow \infty} \sup_{n\in \mathbb{N} }||\mathbf{P}_n(Z_t=\cdot)-\nu(\cdot)||<\frac{K}{t_0}.
\]
Since $t_0$ was taken  arbitrary, we deduce \eqref{eq:stationaryuniform}.

For  the case with catastrophes, we use an order preserving coupling.  In other words, we introduce two branching  processes  $Z$ and $\overline{Z}$ with the same parameters   $d=\overline{d}=0,$ $c=\overline{c},$ $\pi_i=\overline{\pi}_i,$ $b_i=\overline{b}_i$, for $i\in \mathbb{N}$, but the former  take catastrophes into account and the latter  has no catastrophes. Again, we refer to Section 2 of L\'opez et al. \cite{LMS} for the conditions on the intensities $(q_{i,j})_{i,j\in \mathbb{N}_0}$ in order to have  existence of an order preserving coupling which are clearly satisfied by $Z$ and $\overline{Z}$. We denote by $\overline{\mathbf{P}}_n$ for the law of $\overline{Z}$ starting at $n$. Hence, the order preserving and the upper bound in \eqref{eq:uniformbound1} implies   
 \[
 \mathbf{E}_n[\tau_{n_0}]\le \overline{\mathbf{E}}_n[\overline{\tau}_{n_0}]\le K \qquad \textrm{for} \quad n\ge n_0,
\]
where  $\overline{\tau}_{i}:=\inf\{t>0:\overline{Z}_t= i\}$ and $K$ is a positive constant that does not depend on $n$.  Then we proceed similarly as in the case with no catastrophes  in order to provide the uniform convergence to stationarity in the total variation norm. We leave the details to the reader.

In order to  prove last statement of this Theorem, we first assume  that $\mathbb{P}_x(X_\infty\in\{0,1\})<1$, where $X_\infty:=\lim_{t\rightarrow \infty}X_t$. This implies that  $\lim_{t\rightarrow \infty}\mathbb{E}_x[X_t^{n}]<\lim_{t\rightarrow \infty}\mathbb{E}_x[X_t]$, since for every $x\in (0,1)$, $x^n$ is strictly smaller than $x$. From the moment duality between $Z$ and $X$ and since $\nu$ is the invariant distribution of $Z$, we obtain
$$
\sum_{i\ge 0} x^i\nu(i)=\lim_{t\rightarrow \infty}\mathbf{E}_n[x^{Z_t}]=\lim_{t\rightarrow \infty}\mathbb{E}_x[X_t^{n}]<\lim_{t\rightarrow \infty}\mathbb{E}_x[X_t]=\lim_{t\rightarrow \infty}\mathbf{E}_1[x^{Z_t}]=\sum_{i\ge 0} x^i\mu(i),
$$
which contradicts our hypothesis. In other words, we conclude $\mathbb{P}_x(X_\infty\in\{0,1\})=1$ for any  starting point $x\in(0,1)$. Finally, we  use the left-hand side of identity \eqref{limitx} in order to  deduce that $\nu$  satisfies for any $x\in (0,1)$, 
 \[
 \sum_{i\ge 0} x^i\nu(i)=\mathbb{P}_x(X_\infty=1).
 \]
 This completes the proof.
 \end{proof}
 Next we prove  Corollary \ref{thm:criteriaX}.
\begin{proof}[Proof of Corollary \ref{thm:criteriaX}]
From the dominated convergence Theorem and the moment duality  between  $Z$ and $X$, we observe that for any $x\in (0,1)$ and $n\ge 1$, we have
$$
\mathbb{E}_x\left[\lim_{t\rightarrow\infty}X_t^n\right]=\lim_{t\rightarrow\infty}\mathbb{E}_x[X_t^n]=\lim_{t\rightarrow\infty}\mathbf{E}_n[x^{Z_t}]=\mathbf{E}_n\left[\lim_{t\rightarrow\infty}x^{Z_t}\right].
$$
From Theorem \ref{thm:criteria}, we observe that the limit of the generating function of $Z$ does not depend on the starting state $n$. This implies that all the moments of $X_\infty=\lim_{t\rightarrow\infty}X_t$ must be equal. Since the limt of $X$ must be in $[0,1]$ and the latter observation,  we deduce  that $\mathbb{P}_x(X_\infty\in\{0,1\})=1$ and thus 
$$\mathbb{P}_x\left(X_\infty=1\right)=\mathbb{E}_x\left[\lim_{t\rightarrow\infty}X_t^n\right]=\mathbf{E}_n\left[\lim_{t\rightarrow\infty}x^{Z_t}\right].$$ 
The proof is completed once we compute $\mathbf{E}_n[\lim_{t\rightarrow\infty}x^{Z_t}]$ in each of the three cases of Theorem \ref{thm:criteria}.
\end{proof}

\subsection{Proof of Theorem \ref{infinity}}
For the proof of Theorem \ref{infinity}, we need Lemma \ref{lem:criteria} and the following result.
\begin{lemma}\label{lem:criteria1}
Assume that  $\varsigma<0,$ $a=0$, $d>0$ and $\rho>0$. Then
\[
\sup_{n\in \mathbb{N} }\mathbf{E}_n\big[\tau_0\big]<\infty,
\]
where $\tau_{0}=\inf\{t>0:Z_t=0\}$.
\end{lemma}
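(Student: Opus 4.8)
The plan is to follow the scheme of Lemma \ref{lem:criteria}: exhibit a deterministic horizon $t_1>0$ and a constant $\delta\in(0,1)$, \emph{both independent of $n$}, such that $\mathbf{P}_n(Z_{t_1}=0)\ge\delta$ for every $n\ge1$, and then dominate $\tau_0$ by $t_1$ times a geometric random variable. Since $d>0$, the state $0$ is absorbing (Proposition \ref{FLyap}), so $\{\tau_0>kt_1\}=\{Z_{kt_1}\neq0\}$; applying the Markov property at the times $jt_1$ together with the uniform bound then gives $\mathbf{P}_n(\tau_0>kt_1)\le(1-\delta)^k$ for all $k\ge1$, whence $\sup_{n\ge1}\mathbf{E}_n[\tau_0]\le t_1/\delta<\infty$ after summing the geometric tail.

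To produce the uniform bound I would use the moment duality of Theorem \ref{lemma:characterization} (legitimate since, by Theorem \ref{Propfinitness}, $Z$ is conservative). Expanding the generating function of $Z_t$ and bounding its tail crudely by $\sum_{k\ge1}x^k\mathbf{P}_n(Z_t=k)\le x$, one obtains, for every $x\in[0,1]$, $n\ge1$ and $t\ge0$,
\[
\mathbf{P}_n(Z_t=0)\;\ge\;\mathbf{E}_n[x^{Z_t}]-x\;=\;\mathbb{E}_x[X_t^n]-x\;\ge\;\mathbb{P}_x(X_t=1)-x,
\]
where the last step uses $X_t^n\ge\mathbf{1}_{\{X_t=1\}}$ pointwise on $[0,1]$ for $n\ge1$. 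The decisive feature is that the right-hand side no longer depends on $n$.

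It remains to control $\mathbb{P}_x(X_t=1)$ for the dual jump-diffusion. Since $a=0$ and $d>0$ (and we are in the standing subcritical cooperative regime $\varsigma<0$), Corollary \ref{thm:criteriaX} gives $\lim_{t\to\infty}\mathbb{P}_x(X_t=1)=1$ for every $x\in(0,1)$. Fixing, say, $x=\tfrac12$, there is thus a deterministic $t_1>0$ with $\mathbb{P}_{1/2}(X_{t_1}=1)\ge\tfrac34$, and hence $\mathbf{P}_n(Z_{t_1}=0)\ge\tfrac14=:\delta$ for all $n\ge1$, which is exactly the input required by the first paragraph. Catastrophes need no special treatment, since Corollary \ref{thm:criteriaX} already covers $\Lambda\not\equiv0$; alternatively one could first reduce to the catastrophe-free case through an order-preserving coupling as in Lemma \ref{lem:criteria}.

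The main obstacle is precisely this uniformity in $n$. A direct argument via $\lim_{t\to\infty}\mathbf{E}_n[x^{Z_t}]=1$ (absorption of $Z$ at $0$) only yields a horizon $t_1=t_1(n)$ depending on the starting state, which is useless for the geometric-domination step; and one cannot invoke monotonicity in $n$ together with a limit ``from infinity'', since coming down from infinity is Theorem \ref{infinity}, of which this lemma is itself an ingredient. The inequality $X_t^n\ge\mathbf{1}_{\{X_t=1\}}$ is what strips off the $n$-dependence, at the modest price of needing fixation of the dual at $1$, which is supplied by Corollary \ref{thm:criteriaX}.
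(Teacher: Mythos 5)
Your proof is correct in its mechanics but follows a genuinely different route from the paper. The paper never invokes the dual process here: it couples $Z$ with the auxiliary process $\overline{Z}$ obtained by setting $\overline{d}=0$ and keeping all other parameters, applies Theorem \ref{thm:criteria} to $\overline{Z}$ to get $\inf_{n}\overline{\mathbf{P}}_n(\overline{Z}_{t_0}=1)>\epsilon$ uniformly in $n$ (since $\mu(\{1\})>0$ and the convergence to stationarity is uniform), and then uses the order-preserving coupling $U_t\le\overline{U}_t$ plus the extra death rate $d>0$ to force $U$ from $\{0,1\}$ into $\{0\}$ within an additional window $h$, yielding $\mathbf{P}_n(Z_{t_0+h}=0)>\mathbf{P}_1(Z_h=0)\,\epsilon$ uniformly in $n$ before the same geometric-domination step you use. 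Your argument replaces the coupling by the inequality $\mathbf{P}_n(Z_t=0)\ge \mathbb{E}_x[X_t^n]-x\ge \mathbb{P}_x(X_t=1)-x$, which is a clean way to strip off the $n$-dependence; what it buys is brevity, at the cost of importing Corollary \ref{thm:criteriaX}(iii). That import deserves one more sentence of justification than you give it: the corollary's case $d>0$ rests on $\mathbf{E}_n[\lim_t x^{Z_t}]=1$, i.e.\ on almost sure absorption of $Z$ at $0$, and the most quotable source for that fact in the paper is the very lemma you are proving. The circle can be broken --- Proposition \ref{FLyap} (Foster--Lyapunov positive recurrence, with $\{0\}$ absorbing and accessible from every state when $d>0$) already gives $\tau_0<\infty$ $\mathbf{P}_n$-a.s.\ for each \emph{fixed} $n$, with no uniformity needed, and that is all Corollary \ref{thm:criteriaX}(iii) requires --- but since you were careful to rule out a circular appeal to Theorem \ref{infinity}, you should perform the same audit on the corollary and record that its proof does not feed back through this lemma. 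With that caveat addressed, your argument stands; the paper's coupling proof has the advantage of being self-contained at the level of the Markov chain and of not depending on the fixation statement for the dual.
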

\begin{proof} In order to prove this Lemma, we use again a coupling method. From our assumptions $Z$ has parameters  $d>0,$ $c,\pi_i,b_i$,  for $i\in \mathbb{N}$, and  $\lambda_{i,k}, $ for  $ k\in\{ 2, \ldots, i\} $. For our purposes we also consider a branching process with  interactions   $\overline{Z}=(\overline{Z}_t, t\in \mathbb{R}_{+,0})$ with parameters $\overline{d}=0$, $\overline{c}=c$, $\overline{\pi_i}=\pi_i$,  $\overline{b_i}=b_i$, for all $i\in \mathbb{N}$, and $\overline{\lambda}_{i,k}=\lambda_{i,k}, $ for   $k\in\{ 2, \ldots, i\} $. We denote by $\overline{\mathbf{P}}_n$ its law starting from $\overline{Z}_0=n$. Observe that the process $\overline{Z}$ fulfills the conditions of Theorem  \ref{thm:criteria}  and thus  there exist a unique stationary distribution $\nu$. In particular,
\[
 \lim_{t\rightarrow \infty}\overline{\mathbf{P}}_n(\overline{Z}_t=1)=\nu(\{1\})>0,
 \]
  and this is a uniform limit on the variable $n$. Thus  for every $\epsilon\in(0,\nu(\{1\}))$ there exists $t_0>0$ such that 
  \[
  \inf_{n\in \mathbb{N}}\overline{\mathbf{P}}_n(\overline{Z}_{t_0}=1)>\epsilon.
  \] 
Since the  processes only differ in the death parameter,  the order preserving coupling is straightforward. 
Let us denote such coupling by  $\{(U_t,\overline{U}_t), t\in \mathbb{R}_{+,0}\}$. In particular, we have that   $U_t$ and $\overline{U}_t$ are equal in  distribution to $Z_{t}$ and   $\overline{Z}_t$, respectively. As we said before such coupling preserves order, meaning that 
$$
U_t\leq \overline{U_t}, \qquad \textrm{for } t\in \mathbb{R}_{+,0}, \qquad \textrm{ almost surely.}
$$
On the other hand, we observe that $\mathbb{P}(U_{h}=0|U_0=1)>0$ for $h\in \mathbb{R}_{+}$. Hence, we conclude that for  $n\in \mathbb{N}$ 
\begin{align*}
\mathbf{P}_n(Z_{t_0+h}=0)=&\mathbb{P}(U_{t_0+h}=0|U_0=n)\\
>& \mathbb{P}(U_{t_0+h}=0|U_{t_0}=1)\mathbb{P}(U_{t_0}=1|U_0=n)\\
>& \mathbb{P}(U_{h}=0|U_{0}=1)\mathbb{P}(\overline{U}_{t_0}=1|\overline{U}_0=n)\\
>&\mathbf{P}_1(Z_{h}=0)\epsilon.
\end{align*}
Again, it  is important to note that the right hand side of the previous equation does not depend on $n$.  From the Markov property we deduce that
\begin{align*}
\mathbf{P}_n(\tau_0>t)\leq&\mathbf{P}_n(\tau_0>\lfloor t/(t_0+h)\rfloor t_0)\\
<&(1-\mathbf{P}_1(Z_{h}=0)\epsilon)^{\lfloor t/(t_0+h)\rfloor}.
\end{align*}
This allow us to conclude that stochastically $\tau_0\leq (t_0+h)G$ where $G$ is Geometric random variable with parameter $\mathbf{P}_1(Z_{h}=0)\epsilon$. Note that $G$ is independent of the starting condition $n.$ This leads to the conclusion  that, for every $n\in \mathbb{N}$,
 $$\mathbf{E}_n[\tau_0]<\frac{t_0+h}{\mathbf{P}_1(Z_{h}=0)\epsilon}<\infty,$$
 which completes the proof.
 \end{proof}
\begin{proof}[Proof of Theorem \ref{infinity}]
In order to prove our result, we first deduce that under our assumptions  the sequence $(\mathbf{P}_n)_{n\in \mathbb{N}}$ converges weakly in the space of probability measures on $D(\overline{\mathbb{N}}, [0, T])$, the space of Skorokhod of c\`adl\`ag functions on $[0,T]$ with values in $\overline{\mathbb{N}}$. We follow the tightness argument provided on the proof of Theorem 1 in Donelly \cite{Do}. We observe that the process does not possess instantaneous states and also that it is stochastically monotone with respect to the starting point. In other words, condition (A1) of Theorem 1 in \cite{Do} is satisfied. Moreover, under our assumptions the process does not explode and condition (A2) is also satisfied by denoting $B_n^N$ the branching process with interactions starting from $n$ and stopped at state $N$. Then tightness holds and we identify the finite marginal distributions by noticing that for $k\ge 1$, for $t_1, \ldots t_k\in \mathbb{R}_{+,0}$, and for $n_1, \dots, n_k \in \mathbb{N}$, the probabilities
 $\mathbf{P}_n(Z_{t_1}\le n_1, \cdots, Z_{t_k}\le n_k)$ are non-increasing with respect to $n$.

 Next, since $Z$ and $X$ are moment duals, for any $t\in \mathbb{R}_+$, we have 
\begin{equation}\label{comdown}
\mathbf{P}_\infty(Z_t<\infty)=\lim_{x\rightarrow 1}\lim_{n\rightarrow \infty} \mathbf{E}_n[x^{Z_t}]=\lim_{x\rightarrow 1}\lim_{n\rightarrow \infty} \mathbb{E}_x[X_t^n]=\lim_{x\rightarrow 1}\mathbb{P}_x(X_t=1), 
\end{equation}
 where in the first equality we had used that $(\mathbf{P}_n)_{n\in\mathbb{N}}$ converges weakly in the space of probability measures. In order to deduce that the previous probability equals 1, we first consider the case  $\Lambda\equiv 0$, $d=0$ and $\rho=0$. From Lemma \ref{lemma:hittingtime1}, we deduce
$$
\lim_{x\rightarrow 1}\mathbb{P}_x(X_t=1)=\lim_{x\rightarrow 1}\mathbb{P}_x(X_{T_{0,1}}=1, T_{0,1}<t)=1.
$$
By a comparison result for diffusions (see for instance Theorem 2.2 in Dawson and Li \cite{DL}), we deduce that the latter also holds true for the case $\rho>0,$ $\Lambda\equiv 0$ and $d=0$. In other words, when  $\Lambda\equiv 0$ and $d=0$,   we deduce from  \eqref{comdown} that
$$
\mathbf{P}_\infty(Z_t<\infty)=1 \qquad \textrm{for any}  \quad t\in\mathbb{R}_+,
$$   
and conclude that  the process $Z$ comes down from infinity. Finally,  the remaining cases (i.e. $d>0$ and/or catastrophes) follows from an order preserving coupling.  

Next,  we study the limit of $\mathbf{E}_n[\tau_1]$ as $n$ goes to $\infty$. If the parameters $d,c,\pi_i,b_i$,  for $i\in \mathbb{N}$,  and $\lambda_{i,k}, $ for  $k\in\{2,\ldots, i\} $ are as in Lemmas \ref{lem:criteria} and \ref{lem:criteria1}, then it is clear that $\lim_{n\to \infty}\mathbf{E}_n[\tau_1]<\infty$. If the parameters are as in Theorem \ref{thm:criteria}, then identity \eqref{eq:stationaryuniform} implies that there exists a time $t_0$ such that $\sup_{n\in\mathbb{N}}\mathbf{P}_n(Z_{t_0}=1)>\nu(1)/2.$ By the Markov property, for every $n\ge 1$, we observe  $\mathbf{P}_{n}(\tau_1>kt_0 )<\mathbf{P}_n(Z_{it_0}\neq 1, \text{ for all }i\in\{1,2,...,k\})<(1-\nu(1)/2)^k$. This implies that $\mathbf{E}_n[\tau_1]<2/\nu(1)<\infty$ for all $n\in\mathbb{N}$, in other words we have  $\lim_{n\rightarrow \infty}\mathbf{E}_n[\tau_1]<2/\nu(1)$. This completes the proof.
\end{proof}

\subsection{Proof of Proposition \ref{lem:criteriaannihilation}}

\begin{proof}[Proof of Proposition \ref{lem:criteriaannihilation}] We first deduce that  for any choice of parameter it holds that
\[
\sup_{n\in\mathbb{N}}\mathbf{E}_n[\tau_{0,1}]<\infty.
\]
Let us first assume that $a=0$.  If $d>0$ or  $d=0$ and $\pi_i=0$ for all $i\in \mathbb{N}$, then the result follows immediately from  Lemmas \ref{lem:criteria} and \ref{lem:criteria1}.  If  $d=0$ and $\pi_i>0$ for some $i\in\mathbb{N}$, then identity \eqref{eq:stationaryuniform} guarantees that for all $\epsilon<\nu(1)$ there exists $t_\epsilon>0$ such that $\inf_{n\in\mathbb{N}}\mathbf{P}_n(Z_{t_\epsilon}=1)>\epsilon.$ Thus, by the strong Markov property we conclude 
\[
\sup_{n\in\mathbb{N}}\mathbf{E}_n[\tau_{0,1}]\leq \sup_{n\in\mathbb{N}}\mathbf{E}_n[\tau_{1}]\leq\frac{t_\epsilon}{\epsilon}<\infty.
\]  
Next, we assume $a>0$.  In this case, we will use again an order preserving coupling. 
Let $Z$ be a branching process with interactions with parameters $a>0, d,$ $c,\pi_i,b_i$, and  $\lambda_{i,k}, $ for  $ k\in\{ 2, \ldots, i\} $ that represents the catastrophes. Recall that  $\mathbf{P}_n$  denotes its law starting from ${Z}_0=n$. We also  consider another  branching process with  interactions  $\overline{Z}=(\overline{Z}_t, t\in \mathbb{R}_{+,0})$, with parameters $\overline{a}=0, \overline{d}=d$, $\overline{\pi_i}=\pi_i, \overline{c}=c+a, \overline{b_i}=b_i$ for all $i\in \mathbb{N}$ and $\overline{\lambda}_{i,k}, $ for  $ k\in\{ 2, \ldots, i\} $.  We denote by $\overline{\mathbf{P}}_n$  its law starting from $\overline{Z}_0=n$.  Since the  only parameters in which the processes $Z$ and $\overline{Z}$ differ are the annihilation and  competition,  the coupling is straightforward (every time there is an annihilation event in $Z$ there is a competition event in $\overline{Z}$). 

Let us denote by  $\{(U_t,\overline{U}_t), t\in \mathbb{R}_{+,0}\}$ such  coupling and observe 
\[
U_t\leq \overline{U_t}, \qquad \textrm{for } t\in\mathbb{R}_{+,0}, \qquad \textrm{ almost surely.}
\]
Since $\overline{U}$ has no annihilation, we conclude by the previous argument that
$$\sup_{n\in\mathbb{N}}\mathbf{E}_n[\tau_{0,1}]<\sup_{n\in\mathbb{N}}\mathbf{E}_n[\overline{\tau}_{0,1}]<\infty,$$
where $\overline{\tau}_{0,1}$ denotes the first hitting time of the states $\{0,1\}$ of the process $\overline{Z}$.
Whenever $d>0$, we have
$$\sup_{n\in\mathbb{N}}\mathbf{E}_n[\tau_{0}]<\sup_{n\in\mathbb{N}}\mathbf{E}_n[\overline{\tau}_{0}]<\infty.$$
This prove part (ii).

From the first part of the proof, we know that $Z$ visits $\{0,1\}$. Observe that if $Z$ visits the state $\{0\}$ it will get absorbed, but if it visits the state $\{1\}$ it will get absorbed if and only if $d=0$ and $\pi_i=0$ for all ${i\in\mathbb{N}}$. 
The proofs of parts (a) and (b) relies in verifying whether  the states $\{0\}$ and $\{1\}$  are absorbing states or not.

For part (a), the assumptions guarantee that for every $n\in\mathbb{N}\setminus\{1\}$ there exist some number $i\in\mathbb{Z}$ such that $Z$ goes from $n$ to $n+2i-1$ at a positive rate. Since $a>0$, then $Z$ goes from $n$ to $n-2$ at a positive rate. In other words,  the states  $\{0\}$ and $\{1\}$ are accessible.

On the other hand, we deduce that  $\{1\}$ is an absorbing state if and only if $\sum_{i\in\mathbb{N}}\pi_i=0$.  If $\{1\}$ is an absorbing state, then
 from the first part of the proof, we deduce $\sup_{{n\in\mathbb{N}}}\mathbf{E}_n[\tau_{0,1}]<\infty.$ If $\{1\}$ is not absorbing, then $\{0\}$ is accessible from $\{1\}$, which implies that there exists $\epsilon>0$ and $t_\epsilon\in\mathbb{R}_+$ such that 
$$
\mathbf{P}_1(Z_{t_\epsilon}=0)>\epsilon.
$$
Using the strong Markov property we conclude
$$
\sup_{n\geq1}\mathbf{E}_n[\tau_{0}]<\frac{1}{\epsilon}\left(\sup_{n\geq1}\mathbf{E}_n[\tau_{0,1}]+t_\epsilon\right)<\infty.
$$
This completes the proof of part (a).

For part (b), we observe that $Z$ only makes jumps of even size, then the support of $\mathbf{P}_n(Z_t=\cdot)$ are the even or the odd numbers depending the value $n$. If $n$ is even, then  $\{1\}$ is not accessible and $\{0\}$ is accessible. In other words,  $\tau_{0,1}=\tau_{0}$ and  by the first part of our proof,   we conclude  $\sup_{{n\in\mathbb{N}}}\mathbf{E}_n[\tau_{0}]<\infty$. Similarly, if $n$ is odd then $\{1\}$ is accessible but not the state $\{0\}$. In this case   $\tau_{0,1}=\tau_{1}$ and again from  the first part of the proof we conclude $\sup_{{n\in\mathbb{N}}}\mathbf{E}_n[\tau_{1}]<\infty$.

On the other hand  under our assumptions, we deduce that $\{1\}$ is an absorbing state if and only if $\sum_{i\in\mathbb{N}}\pi_i=0$. If $\sum_{{i\in\mathbb{N}}}\pi_i>0$, we have that  $\{1\}$ is positive recurrent. The uniform convergence to the stationary distribution in this case follows form similar arguments as those used in  Theorem \ref{thm:criteria}, part $(ii)$. We leave the details to the reader.
\end{proof}

\section*{Acknowledgments}
All authors would like to thank Gabriel Berzunza for many useful discussions as well as two
anonymous referees whose careful reading  led to significant
improvements.
AGC acknowledges support from UNAM through the grant PAPIIT IA100419 and from the German Research Foundation through the Priority Programme 1590 \textit{Probabilistic Structures in Evolution.} JCP and JLP also acknowledge support from the Royal Society and CONACyT-MEXICO.

\end{document}